\documentclass[10pt,a4paper]{article}

\usepackage{amsmath,amssymb,multirow,color,subfig,amsfonts,amsbsy,mathtools,tikz,float}
\usepackage{graphicx}
\usepackage{hyperref}
\hypersetup{
	colorlinks = true,
	linkcolor = {blue},
	urlcolor = {red},
	citecolor = {blue}
}
\usepackage[nameinlink,capitalise,sort]{cleveref}
\usepackage{todonotes}

\usepackage{enumitem}

\newcommand{\R}{\mathbb{R}}
\newcommand{\RR}{\mathbb{R}}
\newcommand{\N}{{\mathbb N}}
\newcommand{\NN}{{\mathbb N}}

\newcommand{\PP}{\mathcal{P}}

\newcommand{\eqdef}{\vcentcolon=}
\newcommand{\dd}[1]{\mathinner{\mathrm{d}{#1}}}

\newcommand{\norm}[1]{\left\| #1 \right\|}
\newcommand{\abs}[1]{\left| #1 \right|}

\newcommand{\plan}[2]{\mathcal{T}\left(#1, #2\right)}
\newcommand{\optplan}[2]{\mathcal{T}^{opt}\left(#1, #2\right)}
\newcommand{\pushforward}{{}_{\#}}

\newcommand{\rvf}{\mathbf{RVF}}
\newcommand{\mvf}{\mathbf{MVF}}
\newcommand{\rfc}{\mathbf{RFC}}

\newtheorem{theorem}{Theorem}
\newtheorem{prop}{Proposition}
\newtheorem{definition}{Definition}

\newtheorem{corollary}{Corollary}
\newtheorem{remark}{Remark}

\def\MyItem[#1]#2{\item[{\rm #1}]#2}
\newenvironment{proof}{\noindent{\textsc{Proof.}}} {$\hfill\Box$\vspace{0.1 cm}\\}

\DeclareMathOperator{\lip}{Lip}
\DeclareMathOperator{\spt}{spt}
\DeclareMathOperator{\diam}{diam}

\begin{document}
\title{Properties of Measure Controls\\ and Their Trajectories}
\author{Mauro Garavello$^1$, Xiaoqian Gong$^2$, and Benedetto Piccoli$^3$}

\footnotetext[1]{Department of Mathematics and its Applications,
  University of Milano-Bicocca,
  Via  R. Cozzi 55, I--20125 Milano, Italy.\hfill\\
  Email address: \texttt{mauro.garavello@unimib.it}
}

\footnotetext[2]{Department of Mathematics,
  Amherst College,
  Amherst, MA, USA, 01002. \hfill\\
  Email address: \texttt{xgong@amherst.edu}
}

\footnotetext[3]{Department of Mathematical Sciences \&  Center for Computational and Integrative Biology, Rutgers University Camden, Camden, NJ, USA, 08102. \hfill\\
    Email address: \texttt{piccoli@camden.rutgers.edu}}

\maketitle

\begin{abstract}
    This paper deals with the concepts of \textsl{measure controls} 
    and of \textsl{measure vector fields},
    within the mathematical framework of \textsl{measure differential equations} (MDEs),
    recently proposed in~\cite{piccoli_measure_2019}. 
    Measure controls can be seen as a generalization of relaxed control.
    Moreover, they are particularly suitable for studying dynamics with uncertainty.

    The main results of this paper include establishing the existence and 
    well-posedness of control
    systems with measure controls and proving the equivalence between measure controls
    and measure vector fields. The stability and closure properties of the 
    trajectory set are also studied.
    \medskip

    \noindent\textbf{Keywords:} control systems, measure controls,
    differential inclusions, measured-valued trajectories.
   
   \medskip

   \noindent\textbf{MSC~2020:} 34H05, 35Q93, 34A60, 35R06.

\end{abstract}

\section{Introduction}
\label{sec:introduction}

Traditionally, the study of control systems has focused on deterministic settings, where state variables are governed 
by dynamics, influenced by a fixed set of inputs or controls \cite{Agrachev_book,bressan-piccoli-book}. 
However, many real-world applications, particularly in economics, 
biology, and robotics, involve control mechanisms subject 
to uncertainties and fluctuations, best described by probabilistic models. 
This consideration has led to the generalization of control systems 
through the inclusion of generalized control functions, as for example relaxed controls
\cite{artstein_1978},
which extend the notion of classical control inputs by using probability measures.

Inspired by the recent work in~\cite{piccoli_manzo},
this paper explores  
the concepts of \textsl{measure controls} and of \textsl{measure vector fields},
within the mathematical framework of \textsl{measure differential equations} (MDEs),
proposed in~\cite{piccoli_measure_2019}. 
A measure control, which can be seen as a direct generalization of relaxed controls
\cite{artstein_1978}, is
a map assigning to every probability measure $\mu$ a probability measure on the product of the state and the control space. 
Using disintegration \cite{ambrosio-gigli-savare-book}, 
a measure control is equivalent to assigning a probability measure on the control set $U$ for every point $x$ in the state space. The mass located at $x$ then moves in the directions of the probability measure on $U$. The approach uses the framework of optimal transport, and in particular Wasserstein distances \cite{Santambrogio_book,villani2008optimal}, and is effective to deal with problems with uncertainty. 
The paper~\cite{piccoli_manzo}
introduced the concept of measure control and provided some basic properties of measure controls. The main aim of the present paper is to prove various results concerning measure controls, in the same spirit as properties of controlled systems of ordinary differential equations \cite{bressan-piccoli-book}. In this sense, the present approach can be seen as a natural generalization of deterministic control system to a probability setting.

Measure controls were defined using the approach of MDEs, see \cite{piccoli_measure_2019}. This type of equations have been analyzed by many recent papers: \cite{camilli2020superposition} proved 
a superposition principle for MDEs; 
\cite{piccoli2018measure} generalized the concept to measure differential inclusions;
\cite{cavagnari2023dissipative} introduced a variational perspective for dissipative multifunctions for measures;
\cite{piccoli2019measure} considered MDEs with sources and applications to crowd dynamics were presented in \cite{PR18}; 
\cite{dull2023structured} explored applications to population biology, 
\cite{piccoli2019modeling} focused on disturbance rejection,
\cite{borghi2025dynamics} applications to multi-agent system, 
\cite{piccoli2023control} provided a general perspective, and \cite{gong2022measure} studied a coupled MDE with an ordinary differential equation (ODE) to model viral infections. A related approach is the one developed in \cite{bonnet2021differential} using multivalued maps between measures and vector fields. Other notable approaches to transport-type equations for measures and intersections with control theory can be found in
\cite{ambrosio-gigli-savare-book,cavagnari2018generalized,cavagnari2015hamilton,chen2021optimal,
d2024lyapunov,d2026differential,jimenez2020optimal}.

The main contributions of this paper range from establishing the existence and 
well-posedness of control
systems with measure controls, to proving the equivalence between measure controls
and measure vector fields. We also investigate the stability and closure properties of the 
trajectory set.
Specifically, for a fixed measure control, we prove that the associated differential equation
admits a unique measure-valued solution, constructed via the 
\textsl{Lattice Approximate Solution} method; see~\cite{piccoli_measure_2019}.
We prove that this solution is stable, exhibiting continuous dependence on the control
within the topology induced by the Wasserstein metric.
Furthermore, by applying classical compactness results for continuous functions, we establish 
the closure of the trajectory set, namely that
every limit point of a sequence of trajectories is itself a solution 
to an equation with a measure-valued vector field.
Finally, using techniques typical of differential inclusions \cite{aubin-cellina-book}, 
we show a one-to-one correspondence between measure controls 
and measure vector fields, that yield identical solutions.

The paper is organized as follows.
In \cref{sec:control_system} we recall the basic definitions of control systems and
differential inclusions, and we introduce the concepts of measure vector field and
measure controls.
In \cref{sec:equivalence_MVF_measurecontrols} we prove that the concept of measure vector
field is equivalent to that of measure control.
\cref{sec:Peano} and \cref{sec:well-posedness} deal respectively with an existence theorem
and a well-posedness result for the control system with measure controls.
The proof of stability of the trajectories is contained in \cref{sec:stability},
while in \cref{sec:closure} the property of closure for the trajectories is shown.
Finally, \cref{sec:Wasserstein} contains the basic notions of the Wasserstein metric on a 
Polish space.

\section{Control Systems}
\label{sec:control_system}
In this section we recall the basic notions for classical control systems on the Euclidean
space $\RR^n$, and we
introduce a generalization to the case of measure valued controls. For more
details see~\cite{Agrachev_book, bressan-piccoli-book, warga_book}.

Let $(\mathcal U, d_{\mathcal{U}})$ be a Polish space, that is, a metric space that is both complete and separable. For each point \(x \in \mathbb{R}^n\), let \(T_x \mathbb{R}^n\) denote the tangent space of \(\mathbb{R}^n\) at \(x\). The tangent bundle \(T\mathbb{R}^n\) is the collection of all tangent spaces \(T_x \mathbb{R}^n\) as \(x\) ranges over \(\mathbb{R}^n\). A classical control system
on $\RR^n$ is given by 
\begin{equation}
    \dot{x} = f(x, u), 
    \label{eqn: control}
\end{equation}
where $x \in \mathbb{R}^n$ denotes the system state, 
$u \in U$ is the control, $U \subset \mathcal{U}$ is a  
subset referred to as the control set, and $f \colon \mathbb{R}^n \times U \to T\mathbb{R}^n$ 
is the vector field.
We use the notation $f(x, u) = \left(x, \hat{f}(x, u)\right)$ where 
$\hat{f}$ is the section of the vector field $f$.
On the cartesian product $\mathbb{R}^n \times U$, we consider the distance $d_{\mathbb{R}^n \times U}$ defined by
\begin{equation*}
    d_{\mathbb{R}^n \times U}((x_1, u_1), (x_2, u_2)) = \|x_1 - x_2\|_{\RR^n} + d_{\mathcal{U}}(u_1, u_2), \, \forall (x_i, u_i) \in \R^n \times U, i=1, 2.  
\end{equation*}
Observe that for each fixed $u \in U$, there is a natural vector field $f_u$ associated to~\eqref{eqn: control}.
More precisely, $f_u$ is defined by
\begin{equation*}
    \begin{array}{rccc}
        f_u: & \R^n & \longrightarrow & T \R^n
        \\
        & x & \longmapsto & f(x, u).
    \end{array}
\end{equation*}
In the following, we identify the tangent bundle $T\mathbb{R}^n$ with $\mathbb{R}^n \times \mathbb{R}^n$, 
and, consequently, we denote by $d_{T\RR^n}$ and by $\norm{\cdot}_{T\RR^n}$
respectively the Euclidean distance and norm on $\mathbb{R}^n \times \mathbb{R}^n$. 

To demonstrate the key concepts without addressing regularity issues, we introduce the following
assumptions.
\begin{enumerate}[label=\bf{(H-\arabic*)}, ref=\textup{\textbf{(H-\arabic*)}},
  align=left]
    \item \label{ass:compact_controls}
    The control set \(U \subset (\mathcal{U}, d_{\mathcal{U}})\) is compact.
    \item \label{ass:control_system}
    The vector field \(f \colon \mathbb{R}^n \times U \to T\mathbb{R}^n\) is Lipschitz continuous 
    in both variables, i.e. 
    there exists \(L_f >0\) such that 
    \begin{equation}
        \label{eq:Lip-control-system}
        d_{T\RR^n} \left(f(x, u_1), f(y, u_2)\right) \leq L_f 
        \left(\norm{x-y}_{\RR^n} + d_{\mathcal{U}}(u_1, u_2)\right)  
    \end{equation}
    for every $x, y \in \R^n$ and $u_1, u_2 \in U$.
    \item \label{ass:convexity_control_system}
    For every $x \in \RR^n$ the set $\left\{\hat f(x, u) \colon u \in U\right\}$
    is convex.
\end{enumerate}

\subsection{Differential Inclusions}
The control system~\eqref{eqn: control} naturally induces a set-valued dynamics described by the following differential inclusion:
\begin{equation}
    \dot{x} \in F(x),\qquad \textrm{where} \qquad F(x) \eqdef \left\{\hat f(x, u), u\in U\right\}. 
    \label{eq:diff_inclusion}
\end{equation}
Note that the solution sets for \eqref{eqn: control} and \eqref{eq:diff_inclusion} are equivalent. 
We refer the readers to \cite[Theorem~3.1.1]{bressan-piccoli-book} for more details. 

To quantify the continuity properties of the multivalued map \(x \mapsto F(x)\), 
we recall the definitions of lower semicontinuity (see~\cite[Chapter~1]{aubin-cellina-book})
and of 
Hausdorff continuity (see~\cite[Appendix~A.7]{bressan-piccoli-book}) for a multifunction.
\begin{definition}
    \label{def:lower_sc}
    Consider a multivalued function $F \colon \RR^n \hookrightarrow \RR^n$.
    \begin{enumerate}
        \item Given $x_o \in \RR^n$, the map $F$ is said lower semicontinuous at $x_o$
            if, for every $v_o \in F(x_o)$ and every neighborhood $\mathcal N_2$ of $v_o$,
            there exists a neighborhood $\mathcal{N}_1$ of $x_o$ such that
            \begin{equation*}
                F(x) \cap \mathcal{N}_2 \ne \emptyset
            \end{equation*}
            for every $x \in \mathcal{N}_1$.

        \item The map $F$ is said lower semicontinuous if it is lower semicontinuous
            at every $x_o \in \RR^n$.
    \end{enumerate}
\end{definition}
We now introduce the definition of Hausdorff distance and Hausdorff continuity. 
\begin{definition}
    \label{def:Hausdorff_distance}
    Given two closed sets $C_1,C_2\subseteq \R^n$ we define
    the Hausdorff distance $d_H$ of $C_1$ from $C_2$ by
    \begin{equation}
        \label{eq:Hasudorff_distance}
        d_H(C_1,C_2) \eqdef \max \left\{\sup_{x \in C_1} d(x, C_2), \sup_{y \in C_2} d(y, C_1)\right\},    
    \end{equation}
   where, for a closed set $C \subseteq \mathbb{R}^n$ and a point $x \in \mathbb{R}^n$, the distance from $x$ to $C$ is given by
\[
d(x,C) \coloneqq \inf_{y \in C} \|x-y\|_{\mathbb{R}^n}.
\]
\end{definition}
\begin{definition}
    \label{def:Hausdorff_continuity}
    A multifunction $F \colon \RR^n \hookrightarrow \RR^n$ with compact values is said Hausdorff
    continuous if, for every $x_o \in \RR^n$,
    \begin{equation*}
        \lim_{x \to x_o} d_H(F(x), F(x_o)) = 0.
    \end{equation*}
\end{definition}

\begin{prop}
    \label{prop:multi_Hausdorff}
    Assume~\ref{ass:compact_controls} and~\ref{ass:control_system}.
    Then, the map $x \mapsto F(x)$, defined in~\eqref{eq:diff_inclusion}, 
    is Lipschitz continuous for the Hausdorff metric $d_H$, 
    that is
    \begin{equation*}
        d_H(F(x),F(y))\leq L_f \norm{x-y}_{\RR^n}
    \end{equation*}
    for every $x, y \in \R^n$, where $L_f$ is the Lipschitz constant of $f$.
\end{prop}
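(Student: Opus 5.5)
The plan is to bound each of the two one-sided terms in the definition~\eqref{eq:Hasudorff_distance} of $d_H(F(x),F(y))$ separately by $L_f\norm{x-y}_{\RR^n}$. The argument is symmetric in $x$ and $y$, so it suffices to show
\begin{equation*}
    \sup_{v\in F(x)} d\bigl(v,F(y)\bigr)\le L_f\norm{x-y}_{\RR^n}.
\end{equation*}

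Before doing so, I will check that $F(x)$ is a nonempty compact set, so that the Hausdorff distance in \cref{def:Hausdorff_distance} applies. By~\ref{ass:compact_controls}, $U$ is compact. By~\ref{ass:control_system}, the map $u\mapsto \hat f(x,u)$ is Lipschitz, hence continuous. Therefore $F(x)$ is the continuous image of a compact set and is compact, in particular closed.

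For the main step, fix $v\in F(x)$ and write $v=\hat f(x,u)$ for some $u\in U$. The natural candidate in $F(y)$ is the point obtained with the \emph{same} control, namely $w=\hat f(y,u)\in F(y)$. Then $d(v,F(y))\le \norm{\hat f(x,u)-\hat f(y,u)}_{\RR^n}$.

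The one point requiring care is that~\eqref{eq:Lip-control-system} is stated for the full tangent vectors $f=(x,\hat f)$ under the Euclidean distance on $\RR^n\times\RR^n$. Since
\begin{equation*}
    d_{T\RR^n}\bigl(f(x,u),f(y,u)\bigr)=\Bigl(\norm{x-y}^2+\norm{\hat f(x,u)-\hat f(y,u)}^2\Bigr)^{1/2},
\end{equation*}
we have $\norm{\hat f(x,u)-\hat f(y,u)}_{\RR^n}\le d_{T\RR^n}\bigl(f(x,u),f(y,u)\bigr)$. Applying~\eqref{eq:Lip-control-system} with $u_1=u_2=u$, so that $d_{\mathcal U}(u,u)=0$, gives
\begin{equation*}
    \norm{\hat f(x,u)-\hat f(y,u)}_{\RR^n}\le L_f\norm{x-y}_{\RR^n}.
\end{equation*}

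Taking the supremum over $v\in F(x)$, and then exchanging the roles of $x$ and $y$, bounds both terms of the maximum in~\eqref{eq:Hasudorff_distance} by $L_f\norm{x-y}_{\RR^n}$, which is the claim. Assumption~\ref{ass:convexity_control_system} is not needed at any point.
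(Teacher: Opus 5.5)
Your proof is correct and follows the paper's argument: fix $v=\hat f(x,u)\in F(x)$, compare it with $\hat f(y,u)\in F(y)$ using the same control, bound the difference by $L_f\norm{x-y}_{\RR^n}$ via~\ref{ass:control_system}, and then exchange $x$ and $y$. Your remark that $\norm{\hat f(x,u)-\hat f(y,u)}_{\RR^n}\le d_{T\RR^n}(f(x,u),f(y,u))$ is a correct detail that the paper leaves implicit.
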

\begin{proof}
    First note that, by~\ref{ass:compact_controls} and~\ref{ass:control_system}, for every $x \in \R^n$, the
    set $F(x) \subset \R^n$ is closed. Denote with $L_f$ the Lipschitz constant of the
    vector field $f$; see~\ref{ass:control_system}.

    Fix arbitrarily $x, y \in \R^n$ and $z_1 \in F(x)$. 
    There exists $u_1 \in U$ such that $\hat f(x, u_1) = z_1$.
    Hence, by~\ref{ass:control_system},
    \begin{align*}
        \norm{z_1 - \hat f(y, u_1)}_{\RR^n} = \norm{\hat f(x, u_1) - \hat f(y, u_1)}_{\RR^n} 
        \le L_f \norm{x-y}_{\RR^n}                
    \end{align*}
    and so
    \begin{align*}
        d(z_1, F(y))
        & = \inf_{u \in U} \norm{z_1 - \hat f(y, u)}_{\RR^n} 
        \le \norm{\hat f(x, u_1) - \hat f(y, u_1)}_{\RR^n} 
        \le L_f \norm{x- y}_{\RR^n}.
    \end{align*}
    The arbitrariness of $z_1 \in F(x)$ implies that
    \begin{equation*}
        \sup_{z \in F(x)} d(z, F(y)) \le L_f \| x-y \|_{\RR^n}.
    \end{equation*}
    Similarly, we get that
    \begin{equation*}
        \sup_{z \in F(y )}d(z, F(x))
        \le L_f \| x- y\|_{\RR^n}.
    \end{equation*}
    Therefore, we obtain that
    \begin{equation*}
        d_H(F(x), F(y)) = \max \left\{\sup_{z \in F(x)} d(z, F(y)), \sup_{z \in F(y)} d(z, F(x))\right\}
        \le L_f \| x-y\|_{\RR^n}.
    \end{equation*}
    This concludes the proof.
\end{proof}

\begin{prop}
    \label{prop:Hausdorff_lsc}
    Consider a Hausdorff continuous multifunction $F:\R^n\hookrightarrow \R^n$.
    Then, $F$ is lower semicontinuous.
\end{prop}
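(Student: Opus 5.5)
The plan is to verify \cref{def:lower_sc} directly at an arbitrary point, using only \cref{def:Hausdorff_distance} and \cref{def:Hausdorff_continuity}. Fix $x_o \in \RR^n$, a point $v_o \in F(x_o)$, and a neighborhood $\mathcal N_2$ of $v_o$. We may shrink $\mathcal N_2$ to an open ball $B(v_o,\varepsilon)$ with $\varepsilon>0$. It then suffices to find a neighborhood $\mathcal N_1$ of $x_o$ such that $F(x)\cap B(v_o,\varepsilon)\neq\emptyset$ for all $x\in\mathcal N_1$, because any point of this intersection also lies in $\mathcal N_2$.

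By Hausdorff continuity at $x_o$ there is $\delta>0$ such that $d_H(F(x),F(x_o))<\varepsilon$ whenever $\norm{x-x_o}_{\RR^n}<\delta$. Take $\mathcal N_1 = B(x_o,\delta)$. For $x\in\mathcal N_1$, \eqref{eq:Hasudorff_distance} gives
\begin{equation*}
  d(v_o,F(x)) \le \sup_{y\in F(x_o)} d(y,F(x)) \le d_H(F(x),F(x_o)) < \varepsilon .
\end{equation*}
The sets $F(x)$ are compact (as required in \cref{def:Hausdorff_continuity}), so the infimum defining $d(v_o,F(x))$ is attained at some $w\in F(x)$, provided $F(x)\neq\emptyset$. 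Then $\norm{w-v_o}_{\RR^n}<\varepsilon$, so $w\in F(x)\cap\mathcal N_2$. Even without compactness, the strict inequality yields some $w\in F(x)$ with $\norm{w-v_o}<\varepsilon$ straight from the definition of the infimum.

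The only point needing care is nonemptiness of $F(x)$. With the convention $d(y,\emptyset)=+\infty$, the bound $d_H(F(x),F(x_o))<\varepsilon$ together with $v_o\in F(x_o)$ forces $d(v_o,F(x))<\infty$, hence $F(x)\neq\emptyset$. This is a convention rather than a real obstacle. In the setting of \eqref{eq:diff_inclusion}, the sets $F(x)$ are nonempty anyway, since $U$ is nonempty.

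Since $x_o$, $v_o$ and $\mathcal N_2$ were arbitrary, $F$ is lower semicontinuous at every point, which proves the statement.
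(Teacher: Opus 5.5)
Your proof is correct and follows the paper's argument. You shrink $\mathcal N_2$ to a ball $B(v_o,\varepsilon)$ and use Hausdorff continuity to get $d(v_o,F(x))\le d_H(F(x),F(x_o))<\varepsilon$ on $B(x_o,\delta)$, then pick $w\in F(x)$ within $\varepsilon$ of $v_o$; your added remarks on compactness and nonemptiness of $F(x)$ are harmless refinements that the paper leaves implicit.
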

\begin{proof}
    Fix $x_o \in \RR^n$, $v_o \in F(x_o)$, and a neighborhood $\mathcal{N}_2$ of $v_o$.
    There exists $\epsilon > 0$ such that $B(v_o, \epsilon) \subseteq \mathcal{N}_2$.
    The Hausdorff continuity of $F$ implies the existence of $\delta > 0$ such that
    $d_H(F(x), F(x_o)) < \epsilon$ for every $x \in B(x_o, \delta)$.
    Therefore, $\sup_{z \in F(x_o)} d(z, F(x)) < \epsilon$ for every $x \in B(x_o, \delta)$,
    which implies that $d(v_o, F(x)) < \epsilon$ for every $x \in B(x_o, \delta)$.
    Thus, for every $x \in B(x_o, \delta)$, there exists $v \in F(x)$ 
    such that $d(v_o, v) < \epsilon$.
    This implies that, for every $x \in B(x_o, \delta)$, there exists $v \in F(x)$ 
    such that $v \in \mathcal{N}_2$. This concludes the proof.
\end{proof}

\begin{corollary}
    \label{cor:F_lower_sc}
    Assume~\ref{ass:compact_controls} and~\ref{ass:control_system}.
    Then, the map $x \mapsto F(x)$, defined in~\eqref{eq:diff_inclusion}, 
    is lower semicontinuous.
\end{corollary}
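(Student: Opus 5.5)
The corollary follows by chaining the two preceding propositions; the only hypothesis to check is that $F$ has compact values, since \cref{def:Hausdorff_continuity} applies only to such multifunctions and \cref{prop:Hausdorff_lsc} is stated for them.

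First, we verify compactness of the values. For fixed $x \in \RR^n$, the set $F(x) = \{\hat f(x,u) : u \in U\}$ is the image of $U$ under the map $u \mapsto \hat f(x,u)$. By~\ref{ass:control_system}, this map is Lipschitz continuous from $(U, d_{\mathcal U})$ into $\RR^n$. Indeed, $\norm{\hat f(x,u_1) - \hat f(x,u_2)}_{\RR^n} \le d_{T\RR^n}(f(x,u_1), f(x,u_2)) \le L_f\, d_{\mathcal U}(u_1,u_2)$, because the section is one component of the identification $T\RR^n \cong \RR^n \times \RR^n$. By~\ref{ass:compact_controls}, $U$ is compact, so $F(x)$ is a continuous image of a compact set. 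Hence $F(x)$ is compact, and in particular nonempty and closed, so the Hausdorff distance of \cref{def:Hausdorff_distance} is well defined.

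Second, we obtain Hausdorff continuity. By \cref{prop:multi_Hausdorff}, $d_H(F(x),F(x_o)) \le L_f \norm{x - x_o}_{\RR^n}$ for all $x, x_o$. Letting $x \to x_o$ gives $\lim_{x\to x_o} d_H(F(x),F(x_o)) = 0$, so $F$ is Hausdorff continuous in the sense of \cref{def:Hausdorff_continuity}.

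Third, \cref{prop:Hausdorff_lsc} applied to this Hausdorff continuous multifunction yields that $F$ is lower semicontinuous at every point, which is the claim. No real obstacle is expected. The only point requiring care is the compactness of values, which \cref{prop:multi_Hausdorff} asserts only as closedness, and this is settled by the first step.
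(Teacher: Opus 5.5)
Your proposal is correct and follows the paper's proof, which likewise obtains the corollary directly by chaining \cref{prop:multi_Hausdorff} and \cref{prop:Hausdorff_lsc}. Your explicit check that each $F(x)$ is compact, as the image of the compact set $U$ under the Lipschitz map $u \mapsto \hat f(x,u)$, is a sensible addition that the paper leaves implicit.
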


\begin{proof}
    It is a direct consequence of \cref{prop:multi_Hausdorff} and    
    \cref{prop:Hausdorff_lsc}.
\end{proof}

\subsection{Relaxed Controls}

Here we recall the concepts of relaxed controls and of relaxed feedback controls;
see~\cite{artstein_1978, warga_book} for more details.
\begin{definition}
    A \textbf{relaxed control} \(\tilde{u}\) is a probability measure on the control set $U$, 
    that is, \(\tilde{u} \in \mathcal{P}(U)\). 

    A \textbf{relaxed feedback control}, briefly $\rfc$, is a map \(\tilde{u} \colon \R^n \to \mathcal{P}(U)\). 
\end{definition}
Note that, for every fixed relaxed control \(\tilde{u} \in \mathcal{P}(U)\), 
there is a related vector field \(f_{\tilde{u}}\), namely 
\begin{equation*}
    \begin{array}{rccc}
        f_{\tilde u}\colon & \R^n & \longrightarrow & T \R^n
        \\
        & x & \longmapsto & \displaystyle \int_U f(x, u) \dd{} \tilde u(u).
    \end{array}
\end{equation*}
Moreover, given a relaxed feedback control \(\tilde{u}(\cdot)\), 
we can associate a vector field $f_{\tilde u(\cdot)}$, defined by
\begin{equation*}
    \begin{array}{rccc}
        f_{\tilde u(\cdot)}\colon & \R^n & \longrightarrow & T \R^n
        \\
        & x & \longmapsto & \displaystyle \int_U f(x, u) \dd{} (\tilde u(x))(u).
    \end{array}
\end{equation*}
The following result shows that the Lipschitz continuity of a relaxed feedback control 
with respect to the Wasserstein distance ensures the Lipschitz continuity of its corresponding vector field.
\begin{prop}
   Assume~\ref{ass:compact_controls} and~\ref{ass:control_system}. 
   Fix a relaxed feedback control \(\tilde{u}\) for~\eqref{eqn: control}. 
   Assume that \(\tilde{u}\) is Lipschitz continuous with respect to Wasserstein distance,
   i.e. there exists $L > 0$ such that, for every $x, y \in \R^n$,
   \begin{equation*}
        W_{\mathcal{U}}(\tilde u(x), \tilde u(y)) \le L \| x - y \|_{\RR^n}.
   \end{equation*}
   
   Then the vector field \(x \mapsto f_{\tilde{u}(\cdot)}(x)\) is locally Lipschitz continuous. 
\end{prop}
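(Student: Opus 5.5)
Fix $x,y \in \R^n$ and split the difference of the averaged fields with the triangle inequality:
\begin{equation*}
    \norm{\hat f_{\tilde u(\cdot)}(x) - \hat f_{\tilde u(\cdot)}(y)}
    \le \norm{\int_U \bigl(\hat f(x,u) - \hat f(y,u)\bigr) \dd{} (\tilde u(x))(u)}
    + \norm{\int_U \hat f(y,u) \dd{} (\tilde u(x))(u) - \int_U \hat f(y,u) \dd{} (\tilde u(y))(u)} .
\end{equation*}
Here $\hat f_{\tilde u(\cdot)}$ is the section of $f_{\tilde u(\cdot)}$, and the base-point component is the identity, which is trivially Lipschitz. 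The first term involves a single probability measure. By~\ref{ass:control_system} the integrand has norm at most $L_f \norm{x-y}_{\RR^n}$, so the term is bounded by $L_f\norm{x-y}_{\RR^n}$ because $\tilde u(x)$ has mass one.

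The second term is where the Wasserstein hypothesis enters. I would use the Kantorovich--Rubinstein duality for $W_{\mathcal U}$, i.e.\ the $1$-Wasserstein distance on the compact space $U$, recalled in \cref{sec:Wasserstein}. By~\ref{ass:control_system}, each scalar component $u \mapsto \hat f_i(y,u)$ is $L_f$-Lipschitz on $U$. Duality then gives
\begin{equation*}
    \abs{\int_U \hat f_i(y,u)\dd{}(\tilde u(x) - \tilde u(y))(u)} \le L_f\, W_{\mathcal U}(\tilde u(x),\tilde u(y)) \le L_f L \norm{x-y}_{\RR^n}.
\end{equation*}
To avoid the $\sqrt n$ loss from working componentwise, I would instead choose an optimal plan $\pi \in \optplan{\tilde u(x)}{\tilde u(y)}$. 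This exists by compactness of $U$. Then
\begin{equation*}
    \int_{U} \hat f(y,u)\dd{}\tilde u(x) - \int_U \hat f(y,v)\dd{}\tilde u(y) = \int_{U\times U} \bigl(\hat f(y,u) - \hat f(y,v)\bigr) \dd{}\pi(u,v),
\end{equation*}
and the norm of this is at most $L_f \int d_{\mathcal U}(u,v)\dd{}\pi = L_f W_{\mathcal U}(\tilde u(x),\tilde u(y))$.

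Combining the two terms gives $\norm{\hat f_{\tilde u(\cdot)}(x) - \hat f_{\tilde u(\cdot)}(y)} \le L_f(1+L)\norm{x-y}_{\RR^n}$. This is in fact a global Lipschitz bound, which is stronger than the local one claimed.

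Before this, two preliminary points need checking. First, the integrals must be well defined. Since $U$ is compact by~\ref{ass:compact_controls} and $f$ is continuous, $u\mapsto f(x,u)$ is bounded and continuous, hence integrable against any probability measure. Second, $W_{\mathcal U}$ must be finite on $\PP(U)$, which again follows from compactness. The only step needing care is the identification of $W_{\mathcal U}$ with the $1$-Wasserstein distance and the use of the coupling representation. If the paper's $W_{\mathcal U}$ is a $p$-Wasserstein distance with $p>1$, I would use $W_1 \le W_p$ by Jensen's inequality, and the same bound goes through.
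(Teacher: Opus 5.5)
Your proof is correct. It uses the same add-and-subtract split as the paper, but it handles the second term differently, and that changes the strength of the conclusion.

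The paper keeps the full $\RR^{2n}$-valued field $f(x,u)=(x,\hat f(x,u))$. It bounds the norm of $\int_U f(y,u)\,\dd{}(\tilde u(x)-\tilde u(y))(u)$ by the sum of the absolute values of its $2n$ components, and applies the Kantorovich--Rubinstein duality~\eqref{eq:K-R_duality} to each component separately. It estimates the base-point components $\int_U y_i\,\dd{}(\tilde u(x)-\tilde u(y))(u)$ by $\abs{y_i}\,W_{\mathcal U}(\tilde u(x),\tilde u(y))$. This gives a constant that grows linearly in $\norm{y}_{\RR^n}$ and carries a factor $n$, hence only a local Lipschitz bound.

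Those base-point terms are in fact identically zero, because a constant has the same integral against any two probability measures. Splitting off the base point as the identity map exploits exactly this. Your coupling argument then treats the vector-valued integrand at once rather than componentwise. Together these give the global, dimension-free constant $L_f(1+L)$ for the section. Running the same coupling on $f$ instead of $\hat f$ gives the same constant for the whole field in $d_{T\RR^n}$, since $d_{T\RR^n}(f(y,u),f(y,v))\le L_f\, d_{\mathcal U}(u,v)$ by~\ref{ass:control_system}.

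So your route proves a strictly stronger statement than the proposition. The paper's route relies only on the scalar duality formula recalled in the appendix, but pays for it with the $\norm{y}_{\RR^n}$-dependence and the dimensional factor.
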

\begin{proof}
    Fix $x, y \in \R^n$. Using~\eqref{eq:K-R_duality} and the fact that $d_{T\RR^n}$ is
    the Euclidean distance in $\RR^n \times \RR^n$, we have
    \begin{align*}
        d_{T\RR^n} \left(f_{\tilde u(\cdot)}(x), f_{\tilde u(\cdot)}(y)\right)
        & = \norm{f_{\tilde u(\cdot)}(x) - f_{\tilde u(\cdot)}(y)}_{\RR^{2n}}
        \\
        & \le \int_U d_{T\RR^n}\left(f(x, u), f(y, u)\right) \dd{}(\tilde u(x))(u)
        \\
        & \quad + \norm{\int_U f(y, u) \dd{}(\tilde u(x) - \tilde u(y))(u)}_{\RR^{2n}}
        \\
        & {\le} L_f \norm{x-y}_{\RR^n} 
        + \sum_{i=1}^n \abs{\int_U y_i \dd{}(\tilde u(x) - \tilde u(y))(u)}
        \\
        & \quad + \sum_{i=1}^n \abs{\int_U \hat f_i(y, u) \dd{}(\tilde u(x) - \tilde u(y))(u)}
        \\
        & {\le} L_f \norm{x-y}_{\RR^n} 
        + \sum_{i=1}^{n} (\abs{y_i} + L_f) W_{\mathcal{U}}(\tilde u(x), \tilde u(y))
        \\
        & {\le} L_f \norm{x-y}_{\RR^n} + (\sqrt{2}\norm{y}_{\RR^n} + n L_f) L \norm{x-y}_{\RR^n},
    \end{align*}
    where we denoted by $L_f$ the Lipschitz constant of $f$.
    Therefore, the vector field $x \mapsto f_{\tilde{u}(x)}(x)$ is locally Lipschitz continuous.
\end{proof}

\subsection{Random and Measure Vector Fields}
To generalize the concept of relaxed feedback controls, we need to introduce 
the definitions of random vector field ($\rvf$) 
and measure vector field ($\mvf$). 

\begin{definition}
    A $\rvf$ associated with the control system~\eqref{eqn: control} is a map
    \begin{equation*}
        \begin{array}{rccc}
            v: & \R^n & \longrightarrow & \mathcal{P}(T \R^n)
            \\
            & x & \longmapsto & v(x)
        \end{array}
    \end{equation*}
    such that, for every $x \in \R^n$, 
    \[\spt(v(x)) \subset \{(x,v) \colon v \in F(x)\} \subset T_x \R^n.\]
\end{definition}

\begin{definition}
    A $\mvf$ is a map
    \begin{equation*}
        \begin{array}{rccc}
            V: & \mathcal{P}(\R^n) & \longrightarrow & \mathcal{P}(T \R^n)
            \\
            & \mu & \longmapsto & V[\mu]
        \end{array}
    \end{equation*}
    such that, for every $\mu \in \mathcal{P}(\R^n)$, \(\pi_1 \pushforward (V[\mu]) = \mu\).
\end{definition}

\begin{remark}
    Let \(V \) be a $\mvf$. 
    For every \(\mu \in \mathcal{M}(\R^n)\), \Cref{thm:disintegration} implies that
    there exists a family of probability measures 
    \(\nu_x = \nu_x[V, \mu]\), $x \in \R^n$, such that \(V[\mu] = \mu \otimes_x \nu_x\).
    Note that in general the measures $\nu_x$ depend both on $V$ and on $\mu$. 
\end{remark}

\begin{definition} 
    \label{def:mcf_associated}
    A $\mvf$ \(V \colon \mathcal{P}(\R^n) \to \mathcal{P}(T \R^n)\) is said     
    associated with the control system~\eqref{eqn: control} 
    if, for every $\mu \in \mathcal{P}(\R^n)$,
    \begin{equation*}
        \spt(V[\mu]) \subset \{(x,v) \in T\R^n \colon v \in F(x)\}.  
    \end{equation*}
\end{definition}

The following existence result for a measure differential equation holds; 
see~\cite[Theorem~3.1]{piccoli_measure_2019}.
\begin{theorem}
    \label{thm:existence-MDE}
    Consider $V: \mathcal{P}(\R^n) \to \mathcal{P}(T\R^n)$ a $\mvf$ satisfying the following properties.
    \begin{enumerate}
        \item There exists $C>0$ such that, for every $\mu \in \mathcal{P}(\R^n)$ with
        compact support,
      \begin{equation}
            \sup_{(x, v) \in \, \spt(V[\mu])} \norm{v}_{\RR^n} \le C \left(
                1 + \sup_{x \in \, \spt(\mu)} \norm{x}_{\RR^n}
            \right).
            \label{eqn: sublinear}
        \end{equation}

        \item For every $R>0$ the map
        $V$, restricted to the $\mathcal{P}^R_c(\RR^n)$ of probability measures with 
        support in $B(0, R)$, is continuous. In other words, for every $\bar \mu \in \mathcal{P}^R_c(\RR^n)$
        and $\varepsilon > 0$, there exists $\delta > 0$ such that
        \begin{equation*}
            W_{T\RR^n}\left(V[\mu], V[\bar \mu]\right) < \varepsilon
        \end{equation*}
        for every $\mu \in \mathcal{P}^R_c(\RR^n)$ with the property 
        $W\left(\mu, \bar \mu\right) < \delta$.
    \end{enumerate}
    Then, for every $\mu_o \in \mathcal{P}_c(\RR^n)$ and $T>0$, 
    there exists a solution $\mu:[0, T] \to \mathcal{P}_c(\RR^n)$ to $\dot \mu = V[\mu]$, $\mu(0) = \mu_o$,
    obtained as a limit of \textsl{Lattice Approximate Solution} (LAS).
    Moreover, if $\spt(\mu_o) \subseteq B(0, R)$ for some $R>0$, then, there exists \(C>0\) such that, for every $t, s \in [0, T]$,
    \begin{equation*}
        W_{\R^n}\left(\mu(s), \mu(t)\right) \le C \exp (CT) (R+1) \abs{t-s}.
    \end{equation*}
\end{theorem}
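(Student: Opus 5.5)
The plan is to follow the Lattice Approximate Solution scheme of \cite{piccoli_measure_2019}, obtaining existence by compactness in $C([0,T];\mathcal{P}(\RR^n))$ with the Wasserstein metric.

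\emph{Step 1: the approximation.} Fix $\mu_o$ with $\spt(\mu_o)\subseteq B(0,R)$ and a discretization parameter $N\in\NN$. Set the time step $\Delta t = 1/N$, the space step $\Delta x = 1/N^2$ and the velocity step $\Delta v = 1/N$. Approximate $\mu_o$ by the atomic measure $\mu^N_0$ that places the mass of each spatial lattice cell at the lattice point of that cell.

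Inductively, given the atomic measure $\mu^N_k$, compute $V[\mu^N_k]$ and disintegrate it as $V[\mu^N_k]=\mu^N_k\otimes_x\nu_x$ (\cref{thm:disintegration}). At each atom $x_i$, replace $\nu_{x_i}$ by its projection onto the velocity lattice. Then move mass according to
\begin{equation*}
    \mu^N_{k+1}=\sum_i \sum_j m_i\,\nu^N_{x_i}(v_j)\,\delta_{x_i+\Delta t\, v_j}.
\end{equation*}
On each interval $[k\Delta t,(k+1)\Delta t]$, interpolate linearly in the transport sense, i.e.\ each atom moves along $x_i+(t-k\Delta t)v_j$.

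\emph{Step 2: uniform bounds.} Let $R_k$ be the support radius of $\mu^N_k$. The sublinear growth condition~\eqref{eqn: sublinear}, with an extra $\Delta v$ for the lattice projection, gives $R_{k+1}\le R_k+\Delta t\,C(1+R_k)+o(1)$. A discrete Gronwall argument then yields $R_k\le (R+1)e^{CT}$ uniformly in $N$.

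With this bound, every atom moves with speed at most $C(1+(R+1)e^{CT})$. Coupling each atom with its own displacement shows $W(\mu^N(s),\mu^N(t))\le C e^{CT}(R+1)|t-s|$. So the family $\{\mu^N\}$ is equi-Lipschitz with values in $\mathcal{P}(\overline{B(0,R_T)})$, and that space is compact for $W$. Ascoli--Arzel\`a gives a uniformly convergent subsequence $\mu^N\to\mu$, and $\mu$ inherits the Lipschitz estimate stated in the theorem.

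\emph{Step 3: passing to the limit.} This is the main obstacle. The goal is to show that $\mu$ solves $\dot\mu=V[\mu]$ in the weak sense: for every test function $\varphi\in C^\infty_c$,
\begin{equation*}
    \frac{d}{dt}\int\varphi\,d\mu=\int_{T\RR^n}\nabla\varphi(x)\cdot v\,dV[\mu(t)](x,v).
\end{equation*}

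For the scheme, Taylor-expand $\varphi(x_i+\Delta t\,v_j)$ to get
\begin{equation*}
    \int\varphi\,d\mu^N_{k+1}-\int\varphi\,d\mu^N_k=\Delta t\int\nabla\varphi\cdot v\,dV^N[\mu^N_k]+O(\Delta t^2),
\end{equation*}
where $V^N$ is the lattice-projected field. Compare $V^N[\mu^N_k]$ with $V[\mu^N_k]$: their $W$-distance is at most $\Delta v$. Then compare $V[\mu^N_k]$ with $V[\mu(k\Delta t)]$. Since all the measures live in $\mathcal{P}^{R_T}_c$ and $W(\mu^N_k,\mu(k\Delta t))\to0$ uniformly, hypothesis~2 gives convergence. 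This continuity is only pointwise, so I would upgrade it to uniform continuity on the $W$-compact set $\mathcal{P}^{R_T}_c$ before summing.

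The integrand $\nabla\varphi(x)\cdot v$ is Lipschitz on the bounded region that contains all the supports. The Kantorovich--Rubinstein duality~\eqref{eq:K-R_duality} therefore turns $W$-convergence into convergence of these integrals. Summing over $k$ and sending $N\to\infty$ yields the integral form of the equation, which completes the proof.
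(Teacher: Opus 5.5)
Your proposal is correct and follows the same route as the source this statement comes from. The paper does not prove the theorem itself but cites \cite[Theorem~3.1]{piccoli_measure_2019}, whose proof is this Lattice Approximate Solution scheme: time step $1/N$, space step $1/N^2$ and velocity step $1/N$; a Gronwall bound on supports; uniform Lipschitz-in-time estimates; Ascoli--Arzel\`a in $\mathcal{P}(\overline{B(0,R_T)})$; and passage to the limit in the weak formulation using continuity of $V$ on measures with uniformly bounded support. The one point to tighten is the lattice-projection error in your support recursion: write it as $\Delta t\,\sqrt{n}\,\Delta v$ rather than a generic $o(1)$. This matters because a per-step error that is only $o(1)$, summed over $NT$ steps, need not stay bounded, whereas an $O(\Delta t)$ factor keeps the accumulated error bounded uniformly in $N$.
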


The following well-posedness result for measure differential equations hold;
see~\cite[Theorem~4.1]{piccoli_measure_2019}
\begin{theorem}
    \label{thm:wellposedness_MDE}
    Consider $V: \mathcal{P}(\R^n) \to \mathcal{P}(T\R^n)$ a $\mvf$ such that:
    \begin{enumerate}
        \item there exists $C>0$ such that, for every $\mu \in \mathcal{P}(\R^n)$ with
        compact support, \cref{eqn: sublinear} holds. 
        \item for every $R>0$ there exists $K > 0$ such that
        \begin{equation}
            \mathcal{W}_{T\RR^n}\left(V[\mu], V[\nu]\right) \le K W_{\RR^n}(\mu, \nu)
        \end{equation}
        for every $\mu, \nu \in \mathcal{M}(\R^n)$, $\mu(\R^n) = \nu(\R^n)$,
        $\spt(\mu) \subseteq B(0, R)$, $\spt(\nu) \subseteq B(0, R)$.
    \end{enumerate}
    Then there exists a Lipschitz semigroup of solutions of $\dot \mu = V[\mu]$.
\end{theorem}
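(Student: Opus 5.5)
The plan is to build the semigroup as the limit of the \textsl{Lattice Approximate Solutions} (LAS) used in \cref{thm:existence-MDE}. The key point is to show that these approximations satisfy a discrete Lipschitz estimate in the initial datum that is uniform in the discretization parameter. That estimate will give, at once, convergence of the whole sequence (not only of a subsequence), the semigroup property, and Lipschitz dependence.

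\emph{Step 1: a priori bounds.} Fix $T>0$ and initial data $\mu_o,\nu_o$ supported in $B(0,R)$. From the sublinear growth \eqref{eqn: sublinear}, a discrete Gronwall argument on $\sup_{x\in\spt}\norm{x}$ shows that all LAS (and hence all limits) stay supported in a ball $B(0,R_T)$ with $R_T=(R+1)e^{CT}$. The Lipschitz hypothesis then holds with a single constant $K=K(R_T)$ for all measures involved. Assumption 2 also implies the continuity required in \cref{thm:existence-MDE}, so LAS limits exist.

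\emph{Step 2: one-step contraction estimate.} For a time step $\varepsilon$, an explicit Euler step maps $\mu$ to $\Phi_\varepsilon[\mu] = (x+\varepsilon v)\pushforward V[\mu]$, followed by projection onto a lattice in space and velocity. The calligraphic $\mathcal W_{T\RR^n}$ is, as in \cite{piccoli_measure_2019}, the infimum of the transport cost over plans on $T\RR^n$ whose projection onto the base variables is an optimal plan between $\mu$ and $\nu$. Choosing such a plan $\tau$ and using $\norm{(x+\varepsilon v)-(y+\varepsilon w)}\le\norm{x-y}+\varepsilon\norm{v-w}$ gives
\begin{equation*}
W\bigl(\Phi_\varepsilon[\mu],\Phi_\varepsilon[\nu]\bigr)\le W(\mu,\nu)+\varepsilon\,\mathcal W_{T\RR^n}(V[\mu],V[\nu])\le(1+\varepsilon K)\,W(\mu,\nu).
\end{equation*}
The lattice projections introduce additive errors of order $\Delta x+\varepsilon\Delta v$ per step. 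Iterating over $t/\varepsilon$ steps then yields
\begin{equation*}
W(\mu^N(t),\nu^N(t))\le e^{Kt}W(\mu_o,\nu_o)+o(1)\quad\text{as } N\to\infty,
\end{equation*}
provided the lattice sizes are chosen so that $(\Delta x+\varepsilon\Delta v)/\varepsilon\to0$.

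\emph{Step 3: convergence and semigroup.} The step I expect to be the main obstacle is showing that the full LAS sequence converges. I would use dyadic time steps $\varepsilon_N=2^{-N}$ and compare the $N$-th and $(N+1)$-th approximations on a common step. One step of size $\varepsilon_N$ and two steps of size $\varepsilon_{N+1}$ differ by $O(\varepsilon_N^2)$ in Wasserstein distance. This uses the bounded velocities on $B(0,R_T)$ and the Lipschitz property of $V$: the second half-step applies $V$ at a measure that is $O(\varepsilon_N)$-close to the one used by the single full step. Propagating this local error with the stability estimate of Step 2 gives a total discrepancy of $e^{KT}\sum_N O(\varepsilon_N)$ plus lattice errors. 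This is summable, so the sequence is Cauchy in $C([0,T];\mathcal P_c)$.

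Call the limit $S_t\mu_o$. The semigroup identity $S_{t+s}=S_tS_s$ follows for dyadic times directly from the construction, since restarting the scheme at a dyadic time reproduces the same iterates up to vanishing lattice errors. It extends to all times by the time-Lipschitz bound of \cref{thm:existence-MDE}. Passing to the limit in Step 2 gives $W(S_t\mu_o,S_t\nu_o)\le e^{Kt}W(\mu_o,\nu_o)$, and combining this with the time estimate shows that $S$ is a Lipschitz semigroup whose trajectories solve $\dot\mu=V[\mu]$.
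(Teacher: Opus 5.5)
\paragraph{Verdict.} The paper gives no proof of this statement; it quotes \cite[Theorem~4.1]{piccoli_measure_2019}, so your proposal has to stand on its own. Steps~1 and~2 are fine. The support bound holds, the continuity needed for the existence theorem follows from $W_{T\RR^n}\le W_{\RR^n}+\mathcal{W}_{T\RR^n}$, and the one-step estimate $W(\Phi_\varepsilon[\mu],\Phi_\varepsilon[\nu])\le(1+\varepsilon K)W(\mu,\nu)$ correctly uses a plan whose base projection is optimal. The gap is the consistency claim in Step~3, on which the whole Cauchy argument rests. Under the stated hypotheses, one step of size $\varepsilon$ and two steps of size $\varepsilon/2$ need \emph{not} agree up to $O(\varepsilon^2)$.

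\paragraph{Counterexample.} Take $n=1$ and $V[\mu]\eqdef(X_\mu,\phi)\pushforward\mathcal{L}^1|_{(0,1)}$, where $X_\mu$ is the quantile function of $\mu$, $\phi=1$ on $(0,\tfrac12)$ and $\phi=-1$ on $[\tfrac12,1)$; rescale to $(0,m)$ for mass $m$. Lifting the monotone coupling, which is optimal for $|x-y|$, gives a plan in $\mathcal{A}(V[\mu],V[\nu])$ with zero velocity cost. Hence $\mathcal{W}_{T\RR^n}(V[\mu],V[\nu])=0$ for all $\mu,\nu$, and both hypotheses hold, the first with $C=1$.
\begin{itemize}
\item From $\delta_0$, one step of size $\varepsilon$ gives $\tfrac12(\delta_{-\varepsilon}+\delta_{\varepsilon})$.
\item Two half-steps give $\tfrac12(\delta_{-\varepsilon/2}+\delta_{\varepsilon/2})$ and then $\delta_0$. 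The left atom now carries the lower quantiles, so it gets velocity $+1$, and the right atom gets $-1$.
\item The discrepancy is exactly $\varepsilon$. Since $\pm1$ lie on the velocity lattice, the LAS at levels $N$ and $2N$ show the same gap.
\end{itemize}

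\paragraph{Why the heuristic fails.} Your argument is that ``the second half-step uses $V$ at a measure $O(\varepsilon)$-close to $\mu$.'' But $\mathcal{W}_{T\RR^n}(V[\mu],V[\mu'])$ only measures how cheaply velocities can be matched along \emph{some} optimal coupling of $\mu$ and $\mu'$. An $O(\varepsilon^2)$ bound needs more: the particle moved from $x$ to $x+\tfrac{\varepsilon}{2}v$ must receive a velocity close to $v$. That is closeness along the Lagrangian coupling $(x,x+\tfrac{\varepsilon}{2}v)\pushforward V[\mu]$, which $\mathcal{W}_{T\RR^n}$ does not control. Since $\mathcal{W}_{T\RR^n}$ has no triangle inequality, you also cannot route through the transported field $(x+\tfrac{\varepsilon}{2}v,v)\pushforward V[\mu]$.

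\paragraph{Consequence.} With a local error only of order $\varepsilon_N$, propagating it through Step~2 bounds the gap between consecutive dyadic levels only by $O(1)$. So the Cauchy property is unproved, and with it the well-definedness of $S_t$ and the semigroup identity. In the example the LAS still converge, because consecutive first-order errors cancel rather than accumulate. A correct proof must capture some such mechanism, for instance a direct comparison of discrete trajectories that does not go through local truncation error. The proposal does not supply one.
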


\subsection{Measure Controls}
We introduce here the notion of measure control and of measure valued field associated
to a measure control.
Moreover, we define some regularity concepts for measure controls.

\begin{definition}
    \label{def:measure_control}
    A measure control is a map
    \begin{equation*}
        \tilde u: \mathcal{P}(\RR^n) \longrightarrow \mathcal{P}(\RR^n \times U)
    \end{equation*}
    such that, for every $\mu \in \mathcal{P}(\RR^n)$,
    $\pi_1 \pushforward (\tilde u[\mu]) = \mu$.
\end{definition}

\begin{remark}
    Let \(\tilde u \) be a measure control. 
    For every \(\mu \in \mathcal{P}(\R^n)\), \cref{thm:disintegration} implies that
    there exists a family of probability measures 
    \(\nu_x = \nu_x[\tilde u, \mu]\), $x \in \R^n$, such that \(\tilde u[\mu] = \mu \otimes_x \nu_x\).
    Note that in general the measures $\nu_x$ depend both on $\tilde u$ and on $\mu$. 
\end{remark}

If $\tilde{u}$ is a measure control, then we define the $\mvf$ $V^{\tilde{u}}$ 
associated to $\tilde u$ and to the control system~\eqref{eqn: control} by:
\begin{equation}
    \label{eq:mfv-given-tilde-u}
    \begin{array}{rccc}
        V^{\tilde u}:
        & \mathcal{P}(\R^n)
        & \longrightarrow
        & \mathcal{P}(T \R^n)
        \\
        & \mu
        & \longmapsto
        & f \pushforward (\tilde u[\mu]).
    \end{array}
\end{equation}

Finally, we introduce the notion of $\mathcal W$-continuity for a measure control and for
a sequence of measure controls.

\begin{definition}
    \label{def:regularity_measure_control}
    Let $\tilde u\colon \PP(\RR^n) \to \PP(\RR^n \times U)$ be a measure control.
    \begin{enumerate}
        \item The measure control $\tilde{u}$ is $\mathcal{W}$-continuous at 
            the point $\mu\in\mathcal P_c(\R^n)$ if, for every 
            sequence $\mu_k\in\mathcal{P}_c(\R^n)$ such that 
            $W_{\RR^n}(\mu_k,\mu)\to 0$ as $k\to\infty$, it holds
            \begin{equation*}
                \lim_{k\to+\infty} \mathcal{W}_{\RR^n \times U} (\tilde u[\mu_k], \tilde u[\mu]) = 0.
            \end{equation*}

        \item The measure control $\tilde{u}$ is $\mathcal{W}$-continuous if it is at 
            every point $\mu\in\mathcal P_c(\R^n)$.

        \item The measure control $\tilde{u}$ is said $\mathcal{W}$-Lipschitz continuous if
            there exists a constant $C > 0$ such that
            \begin{equation*}
                \mathcal{W}_{\RR^n \times U}\left(\tilde u[\mu], \tilde u[\nu]\right) \le 
                C\, W_{\RR^n}(\mu, \nu)
            \end{equation*}
            for every $\mu, \nu \in \mathcal P_c(\RR^n)$.
    \end{enumerate} 
\end{definition}

\begin{definition}
    \label{def:unif-Lip-measure-controls}
    Let $\tilde u_k\colon \PP(\RR^n) \to \PP(\RR^n \times U)$, $k \in \NN$, be a 
    sequence of measure controls.

    The sequence of measure controls $\tilde{u}_k$
    is said uniformly $\mathcal{W}$-Lipschitz continuous if  
    there exists a constant $C>0$ such that, for every $k\in\N$ and $\mu,\nu \in\PP_c(\R^n)$, it holds
    \begin{equation}
        \label{eq:W-Lip2}
        \mathcal{W}_{\R^n \times U}
        ({\tilde{u}_k}[\mu], {\tilde{u}_k}[\nu])
        \leq C\, W_{\RR^n}(\mu,\nu).
    \end{equation}
\end{definition}

\section{Equivalence of \texorpdfstring{$\mvf$}{MVF} and Measure Controls}
\label{sec:equivalence_MVF_measurecontrols}
In this section, we prove that, given a $\mvf$ $V\colon \PP(\RR^n) \to \PP(T\RR^n)$,
associated to the control system~\eqref{eqn: control},
it is possible to construct a measure control $\tilde u$ such that
$V = V^{\tilde u}$ on probability measures with compact support.
Since to every measure control $\tilde u$ it is possible to construct $V^{\tilde u}$
as in~\eqref{eq:mfv-given-tilde-u}, we deduce that measure controls are equivalent to
measure vector field.

\begin{theorem}
    Assume hypotheses~\ref{ass:compact_controls}, \ref{ass:control_system}, 
    and~\ref{ass:convexity_control_system} hold.
    Consider a $\mvf$ $V$ associated with the control system~\eqref{eqn: control}, 
    in the sense of \Cref{def:mcf_associated}, 
    such that
    $V$ maps $\PP_c(\R^n)$ to $\PP_c(T\R^n)$.

    Then, for every $R>0$,
    there exists a measure control
    $\tilde{u}$ such that $V[\mu]=V^{\tilde{u}}[\mu]$ for every $\mu\in\PP_c(\R^n)$ with $\spt(\mu)\subset B(0,R)$.
\end{theorem}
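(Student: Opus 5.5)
Given $\mu \in \PP_c(\RR^n)$ with $\spt(\mu)\subset B(0,R)$, I will construct $\tilde u[\mu] \in \PP(\RR^n\times U)$ with first marginal $\mu$ and $f\pushforward(\tilde u[\mu]) = V[\mu]$. Outside this ball, $\tilde u$ can be defined arbitrarily, e.g. $\tilde u[\mu]=\mu\otimes\delta_{u_0}$ for a fixed $u_0\in U$. The natural idea is to lift $V[\mu]$ through the map $f$. The lift will be produced by a measurable selection $g\colon T\RR^n \supset \mathcal G \to \RR^n\times U$ with $f\circ g=\mathrm{id}$ on the graph
\[
\mathcal G \eqdef \{(x,v)\colon v\in F(x)\},
\]
and then I set $\tilde u[\mu]\eqdef g\pushforward(V[\mu])$. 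This is well defined because, by \Cref{def:mcf_associated}, $V[\mu]$ is concentrated on $\mathcal G$, and $\mathcal G$ is closed by \cref{prop:multi_Hausdorff}.

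\textbf{Step 1: a measurable inverse of $f$.} Consider the multifunction $\Phi\colon \mathcal G\hookrightarrow \RR^n\times U$,
\[
\Phi(x,v)\eqdef\{(x,u)\colon u\in U,\ \hat f(x,u)=v\}.
\]
Its values are nonempty because $v\in F(x)$, and compact because $U$ is compact by \ref{ass:compact_controls} and $\hat f$ is continuous by \ref{ass:control_system}. Its graph $\{(x,v,x,u)\colon \hat f(x,u)=v\}$ is closed, so $\Phi$ is upper semicontinuous with compact values, hence measurable. The Kuratowski--Ryll-Nardzewski selection theorem (or the Filippov-type lemma used in \cite[Theorem~3.1.1]{bressan-piccoli-book}) then yields a Borel selection $g(x,v)=(x,\sigma(x,v))$ with $\hat f(x,\sigma(x,v))=v$. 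Compactness of $U$ together with $\mathcal G$ being a closed subset of a Polish space is what makes this work.

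\textbf{Step 2: verification.} The first marginal is
\[
\pi_1\pushforward(g\pushforward V[\mu])=(\pi_1\circ g)\pushforward V[\mu]=\pi_1\pushforward V[\mu]=\mu,
\]
since $\pi_1\circ g=\pi_1$ and $V$ is a $\mvf$. Thus $\tilde u[\mu]$ is a measure control in the sense of \Cref{def:measure_control}. Furthermore,
\[
V^{\tilde u}[\mu]=f\pushforward g\pushforward V[\mu]=(f\circ g)\pushforward V[\mu]=V[\mu],
\]
because $f\circ g(x,v)=(x,\hat f(x,\sigma(x,v)))=(x,v)$ on $\mathcal G$, which carries the full mass of $V[\mu]$. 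Compactness of the support of $\tilde u[\mu]$ follows from $\spt\mu$ compact and $U$ compact.

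\textbf{Main obstacle.} The delicate point is the measurability of the selection $\sigma$. This requires checking that $\Phi$ is measurable as a multifunction on the closed set $\mathcal G$, which I will do via the closed-graph argument above. Hypothesis \ref{ass:convexity_control_system} and the restriction to $B(0,R)$ seem unnecessary for this approach, but they may be needed if one insists on a selection that is continuous in $x$. If that is required, I would first try Michael's selection theorem, using the lower semicontinuity from \cref{cor:F_lower_sc} together with convexity, applied on the compact set $\mathcal G\cap (B(0,R)\times \RR^n)$. That set is bounded because $F$ is bounded on bounded sets.
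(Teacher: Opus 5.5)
Your proof is correct, and it takes a genuinely different and more direct route than the paper.

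\textbf{The paper's route.} It fixes $\epsilon>0$ and a compact $K$, and covers each $F(x)$ by finitely many points with Voronoi cells. It then uses the Michael selection theorem to build locally continuous selections of $F$; this is where \ref{ass:convexity_control_system} and \cref{cor:F_lower_sc} enter. Next it disintegrates $V[\mu]=\mu\otimes_x\nu_x[V,\mu]$ and replaces each $\nu_x$ by a finitely atomic $\alpha_x$. It lifts the atoms to $U$ through lexicographic measurable selections $u_{j,i}$, which gives $\tilde u_\epsilon[\mu]=\mu\otimes_x\beta_x$. Finally it passes to a Prokhorov limit as $\epsilon\to0$. The compactness of $K$ is what forces the restriction to $B(0,R)$.

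\textbf{Your route.} You take a single Borel right inverse $g(x,v)=(x,\sigma(x,v))$ of $f$ on the closed set $\mathcal G=f(\RR^n\times U)$. This is the same kind of measurable-selection tool the paper itself uses for the multifunctions $U_{j,i}$. You then set $\tilde u[\mu]=g\pushforward V[\mu]$, so that $f\pushforward\tilde u[\mu]=V[\mu]$ holds exactly, with no approximation and no limit.

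\textbf{What each approach buys.} Your argument is shorter and uses neither \ref{ass:convexity_control_system}, Michael's theorem, nor disintegration. The map $\sigma$ does not depend on $\mu$. It also yields $V=V^{\tilde u}$ on all of $\PP(\RR^n)$, not just on measures supported in a ball. It also avoids a step the paper leaves implicit: identifying $f\pushforward$ of the Prokhorov limit with $V[\mu]$, which needs continuity of $f$ and a $\mu$-dependent subsequence. The paper's construction gives only intermediate structure: for each $x$, approximate controls that are finitely atomic and built from selections continuous in $x$. That structure is lost in the limit anyway.

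\textbf{Two minor remarks.}
\begin{itemize}
\item A closed graph alone does not give upper semicontinuity into a non-compact target. It does here, because near $(x_0,v_0)$ the values of $\Phi$ lie in the compact set $\overline{B(x_0,1)}\times U$. Alternatively, treat $(x,v)\mapsto\{u\in U\colon \hat f(x,u)=v\}$ as a multifunction into the compact space $U$.
\item Your fallback via Michael's theorem is unnecessary, and it would not work as stated. The inverse multifunction need not be lower semicontinuous, and $U$ has no linear structure in which its values could be convex.
\end{itemize}
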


\begin{proof}
    Fix $\epsilon>0$ and a compact set $K\subset\R^n$.

    Since $U$ is compact by~\ref{ass:compact_controls}, then, for every $x \in K$, the set
    $F(x) \eqdef \left\{\hat f(x, u) \colon u \in U\right\}$
    is compact since~\ref{ass:control_system} and so 
    totally bounded (see~\cite[Theorem~45.1]{munkres2013topology}), which
    implies the existence of elements
    $v_{x, 1}, \ldots, v_{x, N_{x}} \in F(x)$ such that
    $v_{x, i} \ne v_{x, j}$ for every $i \ne j$ and
    \begin{equation*}
        F(x) \subseteq \bigcup_{i=1}^{N_{x}} B\left(v_{x, i}, \frac{\epsilon}{2}\right).
    \end{equation*}
    Define, for every $i=1, \ldots, N_{x}$, the ($x$-dependent) set
    \begin{equation*}
        Z_i^x \eqdef \left\{v \in F(x): \norm{v - v_{x, i}}_{\RR^n} \le 
        \norm{v - v_{x, j}}_{\RR^n} \textrm{ for all } j\ne i\right\}.
    \end{equation*}
    Note that the sets $Z_1^x, \ldots, Z_{N_{x}}^x$ define the Voronoi
    partition of $F(x)$ associated to $v_{x, 1}, \ldots, v_{x, N_x}$.
    We have that, for every $i = 1, \ldots, N_x$, $\diam(Z_i^x) \le \epsilon$.
    Indeed, fix $w \in Z_i^x$. Hence, there exists
    $j \in \left\{1, \ldots, N_{x}\right\}$ such that $\norm{w - v_{x, j}}_{\RR^n} < \frac{\epsilon}{2}$ and so
    $\norm{w - v_{x, i}}_{\RR^n} \le \norm{w - v_{x, j}}_{\RR^n} < \frac{\epsilon}{2}$. 
    Therefore, $Z_i^x \subseteq B\left(v_{x, i}, \frac{\epsilon}{2}\right)$ and so
    $\diam(Z_i^x) \le \epsilon$.

    By~\ref{ass:compact_controls}-\ref{ass:convexity_control_system}, the
    map $F$ has compact and convex values. Moreover, by \cref{cor:F_lower_sc}, $F$
    is lower semicontinuous. Hence, 
    the Michael Selection Theorem~\cite[Chapter~1.11]{aubin-cellina-book} implies that, 
    for every $x \in K$, there exist $N_x$ continuous selections
    $\phi_{x, i}$ of $F$ ($i=1, \ldots, N_x$), both defined on 
    an open ball centered at $x$, denoted by $B_x$, such that:
    \begin{enumerate}
        \item $\phi_{x, i}(x) = v_{x, i}$ for every $i \in \left\{1, \ldots, N_x\right\}$;
        \item $\phi_{x, i}(y) \ne \phi_{x, j}(y)$ for every $j \ne i$ and $y \in B_x$;
        \item $\phi_{x, i}(y) \in F(y)$ for every $i \in \left\{1, \ldots, N_x\right\}$ 
        and $y \in B_x$;
        \item for every $y \in B_x$, the Voronoi partition
    $Z_{x, 1}^y, \ldots, Z_{x, N_x}^y$ of $F(y)$, associated to the points
    $\phi_{x, 1}(y), \ldots, \phi_{x, N_x}(y)$, satisfies
    \begin{equation}
        \label{eq:diam-2eps}
        \diam(Z_{x, i}^y)\leq 2\epsilon
    \end{equation}
    for every $i =1, \ldots, N_x$.
    \end{enumerate}

    The compactness of $K$ implies that there exist
    $\bar x_1, \ldots, \bar x_N$ so that
    $\displaystyle \bigcup_{j=1}^{N} B_{\bar x_j}$ covers $K$.
    Here the open balls $B_{\bar x_j}$ have the properties previously described.
    Using this choice, it is possible to construct, in unique way, an index map
    \begin{equation*}
        S \colon K \longrightarrow \left\{1, \ldots, N\right\}
    \end{equation*} 
    such that $x \in B_{\bar x_{S(x)}} \setminus \bigcup_{h=1}^{S(x)-1} B_{\bar x_h}$
    for every $x \in K$.

    Fix now $\mu \in \mathcal{P}(\RR^n)$ with $\spt(\mu) \subseteq K$.
    By \Cref{thm:disintegration}, there exist a subset $K_\mu$ of $K$ with $\mu(K_\mu) = 0$
    and, for every $x \in K \setminus K_\mu$, a unique probability measure $\nu_x[V, \mu]$ 
    living in $F(x) \subseteq T_x \R^n$ such that
    \begin{equation*}
        V[\mu] = \mu \otimes_x \nu_x[V, \mu].
    \end{equation*}
    For $x \in K \setminus K_\mu$, call $j = S(x)$ and define the atomic measure
    \begin{equation*}
        \alpha_x \eqdef \sum_{i=1}^{N_{\bar x_j}} m_{i, x} \, \delta_{\phi_{\bar x_j, i}(x)},
    \end{equation*}
    where, for every $i = 1, \ldots, N_{\bar x_j}$,
    \begin{equation*}
        m_{i, x} = \nu_{x}[V, \mu]\left(Z_{\bar x_j, i}^{x} 
        \setminus \bigcup_{h=1}^{i-1} Z_{\bar x_j, h}^x\right)    
    \end{equation*} 
    while $\delta_{v}$
    denotes the Dirac-delta measure centered at $v$.
    Thus, we have that 
    \begin{equation*}
        W_{T_x \RR^n}(\alpha_x, \nu_{x}[V,\mu])\leq \frac{\epsilon}{2} .    
    \end{equation*}
    Indeed, consider the transport map $\mathcal{T}: F(x) \to F(x)$,
    defined as $\mathcal{T}(v) = \phi_{\bar x_j, i}(x)$ 
    if $v \in Z_{\bar x_j, i}^x \setminus \bigcup_{j=1}^{i-1} Z_{\bar x_j, h}^x$. Note
    that $\mathcal{T} \pushforward (\nu_{x}[V, \mu]) = \alpha_x$. Hence,
    \begin{align*}
        W_{T_x \RR^n} (\alpha_x, \nu_{x}[V,\mu]) 
        & \le \int_{F(x)} \norm{v - \mathcal{T}(v)}_{\RR^n} 
        \dd \nu_{x}[V, \mu](v)
        \\
        & \le \sum_{i=1}^{N_{\bar x_j}} \int_{Z_{\bar x_j, i}^x 
        \setminus \bigcup_{h=1}^{i-1} Z_{\bar x_j, h}^x}
        \norm{v - \phi_{\bar x_j, i}(\bar x_j)}_{\RR^n} \dd \nu_{x}[V, \mu](v)
        \\
        & \le \frac{\epsilon}{2} \sum_{i=1}^{N_{\bar x_j}} m_{i, x}
        = \frac{\epsilon}{2} .
    \end{align*}

    For every $j \in \left\{1, \ldots, N\right\}$ and $i \in \left\{1, \ldots, N_{\bar x_j}\right\}$,
    define the multifunction $U_{j,i} \colon B_{\bar x_j} \hookrightarrow U$ as
    \[
        U_{j,i}(y) \eqdef \left\{u \in U \colon \hat f(y, u)=\phi_{\bar x_j, i}(y)\right\}.
    \]
    Notice that the multifunction $U_{j,i}$ is measurable because $\hat f$ and $\phi_{\bar x_j, i}$ 
    are continuous, thus we can define the lexicographic selection 
    $u_{j,i}\colon B_{\bar x_j} \to U$,
    which is also measurable (see~\cite[Theorem A.7.3]{bressan-piccoli-book}). 
    
    Then, for all $x \in K \setminus K_\mu$, given $j = S(x)$, define
    the atomic measure:
    \begin{equation*}
        \beta_x \eqdef \sum_{i=1}^{N_{\bar x_j}} m_{i, x} \, \delta_{u_{j,i}(x)},
    \end{equation*}
    which is a probability measure on $U$, i.e. $\beta_x \in \PP(U)$. Finally, 
    define a measure control $\tilde u_\epsilon$ by setting:
    \begin{equation*}
        \tilde{u}_\epsilon [\mu] \eqdef \mu \otimes_x \beta_x \, .
    \end{equation*}

    Passing to the limit as $\epsilon \to 0$ using the 
    Prokhorov Theorem~\cite[Theorem~5.1.3]{ambrosio-gigli-savare-book}, 
    we obtain a measure control $\tilde{u}$ defined on $K$ with the desired property.
    Given $R>0$, we apply the previous construction to the set $K = \overline{B(0,R)}$.
    This concludes the proof.
\end{proof}

\section{Peano-type Theorem for Measure Controls}
\label{sec:Peano}

In this section we consider the 
$\mvf$ $V^{\tilde u}$, defined in~\eqref{eq:mfv-given-tilde-u}, associated
to a given measure control $\tilde{u}$; see \cref{def:measure_control}. 
Under suitable assumptions on $\tilde u$, we prove existence of solution for the
Cauchy problem associated to $V^{\tilde{u}}$.

\begin{theorem}
    Assume~\ref{ass:compact_controls} and~\ref{ass:control_system}.
    Consider a $\mathcal{W}$-continuous
    measure control $\tilde{u}$, in the sense of \cref{def:regularity_measure_control}.
       
    Then,
    given $\mu_0\in\PP_c(\R^n)$ and $T>0$, there
    exists $\mu \colon [0, T] \to \mathcal{P}_c(\RR^n)$, solution to the Cauchy problem
    $\dot{\mu}=V^{\tilde{u}}[\mu]$, $\mu(0)=\mu_0$, 
    where the measure vector field $V^{\tilde{u}}$ is defined in~\eqref{eq:mfv-given-tilde-u}.
\end{theorem}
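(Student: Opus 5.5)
The plan is to apply \cref{thm:existence-MDE} to the measure vector field $V = V^{\tilde u}$ defined in~\eqref{eq:mfv-given-tilde-u}. This reduces the statement to checking the two hypotheses of that theorem: the sublinear growth condition~\eqref{eqn: sublinear}, and the continuity of $V^{\tilde u}$ on $\mathcal{P}^R_c(\RR^n)$ for each $R>0$. First observe that $V^{\tilde u}$ is indeed a $\mvf$. Since $f(x,u) = (x, \hat f(x,u))$, we have $\pi_1 \circ f = \pi_1$ on $\RR^n \times U$. Hence $\pi_1\pushforward (f\pushforward \tilde u[\mu]) = \pi_1\pushforward \tilde u[\mu] = \mu$.

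For the growth condition, fix $\mu$ with $\spt(\mu)\subseteq B(0,R)$. Since $\pi_1\pushforward\tilde u[\mu]=\mu$, the support of $\tilde u[\mu]$ is contained in $\overline{B(0,R)}\times U$. Consequently the support of $f\pushforward\tilde u[\mu]$ is contained in the closure of $f(\overline{B(0,R)}\times U)$, which is compact by~\ref{ass:compact_controls} and continuity of $f$. For $(x,v)$ in this support we have $v=\hat f(x,u)$ for some $u\in U$. By~\ref{ass:control_system},
\[
\norm{\hat f(x,u)}_{\RR^n} \le \norm{\hat f(0,u_0)}_{\RR^n} + L_f\bigl(\norm{x}_{\RR^n} + \diam(U)\bigr)
\]
for a fixed $u_0 \in U$. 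This gives~\eqref{eqn: sublinear} with $C = \max\{L_f,\ \norm{\hat f(0,u_0)}_{\RR^n} + L_f\diam(U)\}$. It also shows that $V^{\tilde u}$ maps $\PP_c(\RR^n)$ into $\PP_c(T\RR^n)$.

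For continuity, fix $\bar\mu\in\mathcal P^R_c(\RR^n)$ and let $\mu_k \in \mathcal P^R_c(\RR^n)$ with $W(\mu_k,\bar\mu)\to 0$. By $\mathcal W$-continuity, $\mathcal W_{\RR^n\times U}(\tilde u[\mu_k],\tilde u[\bar\mu])\to0$. The key step is that push-forward by a Lipschitz map is Lipschitz for the Wasserstein distance. Indeed, if $\gamma$ is a transference plan between $\tilde u[\mu_k]$ and $\tilde u[\bar\mu]$, then $(f\times f)\pushforward\gamma$ is a plan between $V^{\tilde u}[\mu_k]$ and $V^{\tilde u}[\bar\mu]$. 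Using~\eqref{eq:Lip-control-system},
\[
\int d_{T\RR^n}(f(a),f(b))\,\dd{}\gamma(a,b) \le L_f \int d_{\RR^n\times U}(a,b)\,\dd{}\gamma(a,b),
\]
so $W_{T\RR^n}(V^{\tilde u}[\mu_k],V^{\tilde u}[\bar\mu])\le L_f\, \mathcal W_{\RR^n\times U}(\tilde u[\mu_k],\tilde u[\bar\mu])\to 0$. Sequential continuity in the metric space $(\mathcal P^R_c, W)$ gives the $\varepsilon$–$\delta$ formulation required in \cref{thm:existence-MDE}. Applying that theorem then yields, for any $\mu_0\in\PP_c(\RR^n)$ and $T>0$, a solution $\mu\colon[0,T]\to\PP_c(\RR^n)$ obtained as a limit of Lattice Approximate Solutions.

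The main obstacle I expect is matching the Wasserstein notions. The continuity assumption in \cref{def:regularity_measure_control} is stated with $\mathcal W_{\RR^n\times U}$, while \cref{thm:existence-MDE} uses $W_{T\RR^n}$. If these are the same Kantorovich–Rubinstein ($W_1$) distance on probability measures, as recalled in \cref{sec:Wasserstein}, the push-forward estimate above suffices. If instead $\mathcal W$ is the generalized distance for measures of possibly different mass, I would use its definition via restricted plans plus mass penalty and check that pushing forward by $f$ preserves total masses and contracts the transport part by $L_f$. All measures here are probability measures with supports in a fixed compact set, so both distances coincide with $W_1$ there. A secondary point is that \cref{def:regularity_measure_control} only requires continuity along sequences in $\PP_c$, which is exactly the setting of $\mathcal P^R_c$.
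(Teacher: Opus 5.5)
Your overall route is the paper's: you check the two hypotheses of \cref{thm:existence-MDE} with the same growth constant $C$, and you push plans forward by $f\otimes f$. The growth part is fine. The gap is in the continuity step, and it comes from misreading $\mathcal W_{\RR^n\times U}$. In this paper it is neither the Kantorovich--Rubinstein distance on $\RR^n\times U$ nor a distance for measures of different mass. By \eqref{eq:W_RRn_U} it is the infimum of $\int d_{\mathcal U}(u_1,u_2)\,\dd{T}$ over plans $T\in\plan{\nu_1}{\nu_2}$ whose projection $\pi_{1,3}\pushforward T$ is optimal between the $x$-marginals. It measures only the mismatch of the controls and ignores displacement in $x$, so it does not coincide with $W_1$ on compactly supported probabilities.

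Consequently your key inequality $W_{T\RR^n}(V^{\tilde u}[\mu_k],V^{\tilde u}[\bar\mu])\le L_f\,\mathcal W_{\RR^n\times U}(\tilde u[\mu_k],\tilde u[\bar\mu])$ is false. Take $\tilde u[\mu]=\mu\otimes\delta_{u_0}$, $\mu_k=\delta_{x_k}$, $\bar\mu=\delta_0$, with $x_k\to 0$ and $x_k\neq 0$. Then the right-hand side is $0$, while the left-hand side is at least $\norm{x_k}_{\RR^n}>0$. What your push-forward computation actually proves is the bound $L_f\,W_{\RR^n\times U}(\tilde u[\mu_k],\tilde u[\bar\mu])$, where $W_{\RR^n\times U}$ is the genuine Wasserstein distance for $d_{\RR^n\times U}$. $\mathcal W$-continuity alone does not control that quantity.

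The missing line is the estimate
\[
W_{\RR^n\times U}\bigl(\tilde u[\mu],\tilde u[\nu]\bigr)\le W_{\RR^n}(\mu,\nu)+\mathcal W_{\RR^n\times U}\bigl(\tilde u[\mu],\tilde u[\nu]\bigr).
\]
This is exactly where the optimality constraint built into $\mathcal W_{\RR^n\times U}$ is used. If $S$ is admissible in \eqref{eq:W_RRn_U}, then $\pi_{1,3}\pushforward S$ is optimal, so $\int\norm{x-y}_{\RR^n}\,\dd{S}=W_{\RR^n}(\mu,\nu)$. Hence $\int d_{\RR^n\times U}\,\dd{S}=W_{\RR^n}(\mu,\nu)+\int d_{\mathcal U}(u_1,u_2)\,\dd{S}$, and you take the infimum over such $S$.

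The paper does precisely this. It picks $S_k$ with $\pi_{1,3}\pushforward S_k\in\optplan{\mu_k}{\bar\mu}$ and $\int d_{\mathcal U}\,\dd{S_k}$ small; no attainment of the infimum is needed. It pushes $S_k$ forward by $f\otimes f$ and bounds $W_{T\RR^n}(V^{\tilde u}[\mu_k],V^{\tilde u}[\bar\mu])$ by $(L_f+1)\bigl[W_{\RR^n}(\mu_k,\bar\mu)+\int d_{\mathcal U}\,\dd{S_k}\bigr]$. So the assumption $W_{\RR^n}(\mu_k,\bar\mu)\to 0$ is used directly, not only through $\mathcal W$-continuity. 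With this line inserted, your proof goes through; your constant $L_f$ in place of $L_f+1$ also works.
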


\begin{proof}
    We need to prove that the vector field $V^{\tilde{u}}$ satisfies
    the assumptions of \cref{thm:existence-MDE}.

    Fix $u_0 \in U$ and define
    \begin{equation*}
        C \eqdef \max \left\{L_f, L_f \diam(U) + \norm{\hat f(0, u_0)}_{\RR^n}\right\},
    \end{equation*}
    where $L_f$ is the Lipschitz constant for the vector field $f$; see~\ref{ass:control_system}.
    Note that $C \in \RR$ since~\ref{ass:compact_controls} and~\ref{ass:control_system}.
    Take $\mu \in \mathcal P_c(\RR^n)$ and $(x, v) \in \spt \left(V^{\tilde u}[\mu]\right)$.
    Then, there exists $u \in U$ such that $v = \hat f(x, u)$ and so, using~\ref{ass:control_system},
    \begin{align*}
        \norm{v}_{\RR^n}
        & = \norm{\hat f(x, u)}_{\RR^n} \le \norm{\hat f(x, u) - \hat f(0, u_0)}_{\RR^n} 
        + \norm{\hat f(0, u_0)}_{\RR^n}
        \\
        & \le L_f \norm{x}_{\RR^n} + L_f \diam (U) + \norm{\hat f(0, u_0)}_{\RR^n}
        \\
        & \le C \left(1 + \norm{x}_{\RR^n}\right)
        \\
        & \le C \left(1 + \sup_{x \in \spt\left(\mu\right)}\norm{x}_{\RR^n}\right),
    \end{align*}
    proving the first assumption of \cref{thm:existence-MDE}.

    Consider now the second assumption of \cref{thm:existence-MDE}.
    Fix $R > 0$ and consider the set
    \begin{equation*}
        \mathcal{P}^R_c (\RR^n) \eqdef \left\{\mu \in \PP_c(\RR^n) \colon \spt(\mu) \subseteq B(0, R)\right\}.
    \end{equation*}
    Take $\bar \mu \in \mathcal{P}^R_c (\RR^n)$ and
    a sequence $\mu_k \in \mathcal{P}^R_c (\RR^n)$ such that
    \begin{equation}
        \label{eq:mu_k_convege_to_bar_mu}
        \lim_{k \to + \infty} W_{\RR^n}(\mu_k, \bar \mu) = 0.
    \end{equation}
    We claim that
    \begin{equation}
        \label{eq:second_estimate_thm_existence}
        \lim_{k \to + \infty} W_{T\RR^n}\left(V^{\tilde u}[\mu_k], V^{\tilde u}[\bar \mu]\right) = 0.
    \end{equation}
    Fix $\varepsilon > 0$.
    Since~\eqref{eq:mu_k_convege_to_bar_mu} and since $\tilde u$ is $\mathcal{W}$-continuous at $\bar \mu$, 
    then there exists $\bar k \in \NN$ such that, for every $k \ge \bar k$,
    \begin{equation}
        W_{\RR^n}(\mu_k, \bar \mu) < \frac{\varepsilon}{2(L_f+1)}
    \end{equation}
    and there exists a plan
    $S_k \in \mathcal{T}\left(\tilde u[\mu_k], \tilde u[\bar \mu]\right)$ satisfying
    $\pi_{1, 3} \pushforward S_k \in \mathcal{T}^{opt}\left(\mu_k, \bar \mu\right)$ and
    \begin{equation}
        \int_{(\RR^n \times U)^2} d_\mathcal{U}(u_1, u_2)
        \dd S_k(x, u_1, y, u_2) < \frac{\varepsilon}{2(L_f+1)}.
    \end{equation}
    For every $k \ge \bar k$,
    define the plan $T_k \eqdef (f \otimes f) \pushforward S_k$ and note that
    $T_k \in \mathcal{T} \left(V^{\tilde u}[\mu_k], V^{\tilde u}[\bar \mu]\right)$. 
    Then,
    \begin{align*}
        & \quad W_{T\RR^n}\left(V^{\tilde u}[\mu_k], V^{\tilde u}[\bar \mu]\right)
        \\
        & \le \int_{T\RR^n \times T\RR^n} \left[\norm{x-y}_{\RR^n} + \norm{v-w}_{\RR^n}\right]
        \dd T_k(x, v, y, w)
        \\ 
        & = \int_{(\RR^n \times U)^2} \left[\norm{x-y}_{\RR^n} 
        + \norm{\hat f(x, u_1) - \hat f(y, u_2)}_{\RR^n}\right]
        \dd S_k(x, u_1, y, u_2)
        \\ 
        & \le (L_f+1) \int_{(\RR^n \times U)^2} \left[\norm{x-y}_{\RR^n} + d_{\mathcal U}(u_1, u_2)\right]
        \dd S_k(x, u_1, y, u_2)
        \\
        & \le (L_f+1) \left[W_{\RR^n}(\mu_k, \bar \mu) + \int_{(\RR^n \times U)^2} d_{\mathcal U}(u_1, u_2)
        \dd S_k(x, u_1, y, u_2)\right]
        \\
        & \le \varepsilon
    \end{align*}
    for every $k \ge \bar k$. This proves~\eqref{eq:second_estimate_thm_existence}, i.e. the second
    assumption of \cref{thm:existence-MDE} holds. The proof is completed.
\end{proof}

\section{Well-Posedness Result for Measure Controls}
\label{sec:well-posedness}
In this section we consider the 
$\mvf$ $V^{\tilde u}$, defined in~\eqref{eq:mfv-given-tilde-u}, associated
to a given measure control $\tilde{u}$. 
Under suitable assumptions on $\tilde u$, we prove the well-posedness for the
Cauchy problem associated to $V^{\tilde{u}}$.

\begin{theorem}
    \label{thm:well-posedness}
    Assume~\ref{ass:compact_controls} and~\ref{ass:control_system}.
    Consider a $\mathcal{W}$-Lipschitz continuous
    measure control $\tilde{u}$, in the sense of \cref{def:regularity_measure_control}. 
    Then, there exists a Lipschitz semigroup of solutions to 
    $\dot{\mu}=V^{\tilde{u}}[\mu]$.
\end{theorem}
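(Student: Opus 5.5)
The plan is to check that $V^{\tilde u}$, defined in~\eqref{eq:mfv-given-tilde-u}, satisfies the two hypotheses of \cref{thm:wellposedness_MDE}. The conclusion then follows directly from that theorem. This is the same strategy as in the Peano-type theorem of \cref{sec:Peano}, with continuity replaced by a Lipschitz estimate.

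\textbf{Sublinear growth.} The first hypothesis, \eqref{eqn: sublinear}, does not involve the regularity of $\tilde u$. I would reuse the computation from the proof of the previous theorem verbatim. Fix $u_0\in U$ and set
\[
C=\max\{L_f,\,L_f\diam(U)+\|\hat f(0,u_0)\|\}.
\]
For every $(x,v)\in\spt(V^{\tilde u}[\mu])$ we have $v=\hat f(x,u)$ for some $u\in U$, so $\|v\|\le C(1+\|x\|)$ by~\ref{ass:compact_controls} and~\ref{ass:control_system}.

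\textbf{Lipschitz estimate.} For the second hypothesis, fix $R>0$ and $\mu,\nu$ supported in $B(0,R)$. Take an optimal plan $S\in\optplan{\tilde u[\mu]}{\tilde u[\nu]}$ for the distance $d_{\RR^n\times U}$. Its push-forward $T=(f\otimes f)\pushforward S$ is an admissible plan between $V^{\tilde u}[\mu]$ and $V^{\tilde u}[\nu]$. By~\eqref{eq:Lip-control-system},
\begin{align*}
W_{T\RR^n}(V^{\tilde u}[\mu],V^{\tilde u}[\nu])
&\le \int \bigl[\|x-y\|+\|\hat f(x,u_1)-\hat f(y,u_2)\|\bigr]\,dS\\
&\le (L_f+1)\int\bigl[\|x-y\|+d_{\mathcal U}(u_1,u_2)\bigr]\,dS\\
&=(L_f+1)\,\mathcal W_{\RR^n\times U}(\tilde u[\mu],\tilde u[\nu]).
\end{align*}
The $\mathcal W$-Lipschitz continuity of $\tilde u$ (\cref{def:regularity_measure_control}) bounds the last quantity by $(L_f+1)C\,W_{\RR^n}(\mu,\nu)$. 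We may take $K=(L_f+1)C$, which is even independent of $R$. The Euclidean distance on $T\RR^n$ is dominated by the sum $\|x-y\|+\|v-w\|$, so the inequality also holds for the Wasserstein distance built on $d_{T\RR^n}$.

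\textbf{Main obstacle.} The delicate point is a mismatch of settings. \cref{thm:wellposedness_MDE} requires the Lipschitz bound for pairs of measures $\mu,\nu\in\mathcal M(\RR^n)$ of equal, not necessarily unit, mass, measured in the generalized distance $\mathcal W$. The $\mathcal W$-Lipschitz assumption on $\tilde u$ is only stated on probability measures. I would handle this by normalization. If $\mu(\RR^n)=\nu(\RR^n)=m>0$, I extend $\tilde u$ and $V^{\tilde u}$ by homogeneity, setting $\tilde u[\mu]=m\,\tilde u[\mu/m]$. The Wasserstein-type distances between measures of equal mass $m$ scale linearly in $m$, so the estimate passes to $\mu/m,\nu/m$ with the same constant. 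I would make this convention explicit and check it against the definitions in \cref{sec:Wasserstein}. With both hypotheses verified, \cref{thm:wellposedness_MDE} yields a Lipschitz semigroup of solutions to $\dot\mu=V^{\tilde u}[\mu]$.
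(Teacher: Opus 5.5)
There is a genuine gap: you prove a Lipschitz bound for the wrong quantity. The second hypothesis of \cref{thm:wellposedness_MDE} is stated for the pseudo-distance $\mathcal W_{T\RR^n}$ of \eqref{eq:W_tangent}. There, only plans $T\in\mathcal A(V[\mu],V[\nu])$ are allowed, i.e.\ plans whose base marginal $\pi_{1,3}\pushforward T$ is an \emph{optimal} plan between $\mu$ and $\nu$, and only $\norm{v-w}_{\RR^n}$ is integrated. You bound instead the genuine Wasserstein distance $W_{T\RR^n}$, and this does not control $\mathcal W_{T\RR^n}$. For example, in $\RR^2$ take $\mu=\frac12(\delta_{p_1}+\delta_{p_2})$ and $\nu=\frac12(\delta_{q_1}+\delta_{q_2})$, with $p_1=(-r,0)$, $p_2=(r,0)$, $q_1=-r(\tau,s)$, $q_2=r(\tau,s)$, where $0<\tau<1$ and $s>0$. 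The unique optimal base coupling is $p_i\mapsto q_i$, and the crossed coupling is only slightly more expensive. Put velocity $a$ at $p_1,q_2$ and $b\neq a$ at $p_2,q_1$. Then $\mathcal W_{T\RR^n}=\norm{a-b}_{\RR^n}$, while $W_{T\RR^n}$ and $W_{\RR^n}(\mu,\nu)$ are both $O(r)$ as $r\to0$.

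In your argument, a plan $S$ optimal for $d_{\RR^n\times U}$ trades base cost against control cost. So $\pi_{1,3}\pushforward S$ need not be optimal, and $(f\otimes f)\pushforward S$ need not be admissible in \eqref{eq:W_tangent}. The same line has a second slip: $\int[\norm{x-y}_{\RR^n}+d_{\mathcal U}(u_1,u_2)]\,dS$ equals $W_{\RR^n\times U}(\tilde u[\mu],\tilde u[\nu])$. It is not the pseudo-distance $\mathcal W_{\RR^n\times U}$ of \eqref{eq:W_RRn_U}, which is what the $\mathcal W$-Lipschitz hypothesis controls. That slip is repairable via $W_{\RR^n\times U}\le\mathcal W_{\RR^n\times U}+W_{\RR^n}$ (as in Step~3 of the proof of \cref{thm:closure}), giving $K=(L_f+1)(C+1)$. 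But the result is still only a bound on $W_{T\RR^n}$. You modeled the estimate on the Peano section, which bounds $W_{T\RR^n}$ because \cref{thm:existence-MDE} is formulated with it. The switch to $\mathcal W_{T\RR^n}$ in \cref{thm:wellposedness_MDE} is exactly what changes.

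The missing idea is to stay inside the admissible class from the start, which is what the paper does.
\begin{itemize}
\item Choose $S\in\plan{\tilde u[\mu]}{\tilde u[\nu]}$ with $\pi_{1,3}\pushforward S\in\optplan{\mu}{\nu}$ and $\int d_{\mathcal U}(u_1,u_2)\,dS<\mathcal W_{\RR^n\times U}(\tilde u[\mu],\tilde u[\nu])+\varepsilon$.
\item Since $f(x,u)=(x,\hat f(x,u))$ does not move the base point, $\pi_{1,3}\pushforward\bigl((f\otimes f)\pushforward S\bigr)=\pi_{1,3}\pushforward S$. Hence $T=(f\otimes f)\pushforward S$ is admissible for $\mathcal W_{T\RR^n}(V^{\tilde u}[\mu],V^{\tilde u}[\nu])$.
\item Integrate only $\norm{v-w}_{\RR^n}$, and use optimality of the base marginal, $\int\norm{x-y}_{\RR^n}\,dS=W_{\RR^n}(\mu,\nu)$. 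This gives $\mathcal W_{T\RR^n}(V^{\tilde u}[\mu],V^{\tilde u}[\nu])\le L_f\bigl(W_{\RR^n}(\mu,\nu)+\mathcal W_{\RR^n\times U}(\tilde u[\mu],\tilde u[\nu])+\varepsilon\bigr)\le L_f(C+1)W_{\RR^n}(\mu,\nu)+L_f\varepsilon$.
\end{itemize}
Your growth estimate is fine. Your mass-normalization remark addresses a point the paper passes over by working with probability measures, but it is not where the difficulty lies.
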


\begin{proof}
    We need to prove that the vector field $V^{\tilde{u}}$ satisfies
    the assumptions of \cref{thm:wellposedness_MDE}.
    The first assumption of \cref{thm:wellposedness_MDE} coincides with the
    first assumption of \cref{thm:existence-MDE}. So it is automatically satisfied, since the
    assumptions here are stronger than those of \cref{thm:existence-MDE}.

    We need to prove the second assumption of \cref{thm:existence-MDE}.
    Since $\tilde u$ is a $\mathcal{W}$-Lipschitz continuous measure control,
    there exists a constant $C>0$ such that
    \begin{equation}
        \label{eq:ass_thm_wellposedness}
        \mathcal{W}_{\RR^n \times U} (\tilde u[\mu], \tilde u[\nu]) \le C\, W_{\RR^n}(\mu, \nu)
    \end{equation}
    for every $\mu, \nu \in \mathcal{P}_c(\RR^n)$.
    Fix $\varepsilon > 0$, $R>0$, and take $\mu, \nu \in \mathcal{P}_c(\RR^n)$ such that
    $\spt(\mu) \subseteq B(0, R)$ and $\spt(\nu) \subseteq B(0, R)$.
    Take a plan $S \in \mathcal T(\tilde u[\mu], \tilde u[\nu])$ such that
    $\pi_{1,3} \pushforward S \in \mathcal{T}^{opt} (\mu, \nu)$ and
    \begin{equation}
        \label{eq:eps_approx_W}
        \int_{(\RR^n \times U)^2} 
        d_{\mathcal{U}}(u_1, u_2) \dd S(x, u_1, y, u_2)
        < \mathcal{W}_{\RR^n \times U} \left(\tilde{u}[\mu], \tilde{u}[\nu]\right) + \varepsilon.
    \end{equation}
    Define $T \eqdef (f \otimes f) \pushforward S$. 
    Since $T \in \mathcal{T}\left(V^{\tilde u} [\mu], V^{\tilde u} [\nu]\right)$, 
    \eqref{eq:ass_thm_wellposedness}, and~\eqref{eq:eps_approx_W}, then
    \begin{align*}
        \mathcal W_{T\RR^n}\left(V^{\tilde u} [\mu], V^{\tilde u} [\nu]\right)
        & \le \int_{T\RR^n \times T \RR^n} \norm{v - w}_{\RR^n} \dd T(x, v, y, w)
        \\
        & = \int_{(\RR^n \times U)^2} \norm{\hat f(x, u_1) - \hat f(y, u_2)}_{\RR^n} \dd S(x, u_1, y, u_2)
        \\
        & \le L_f \int_{(\RR^n \times U)^2} 
        \left(\norm{x-y}_{\RR^n} + d_{\mathcal{U}}(u_1, u_2)\right) \dd S(x, u_1, y, u_2)
        \\
        & = L_f \left[W_{\RR^n}(\mu, \nu) + \int_{(\RR^n \times U)^2} 
        d_{\mathcal{U}}(u_1, u_2) \dd S(x, u_1, y, u_2)\right]
        \\
        & \le L_f \left[W_{\RR^n}(\mu, \nu) + \mathcal{W}_{\RR^n \times U} \left(\tilde{u}[\mu], \tilde{u}[\nu]\right) 
        + \varepsilon\right]
        \\
        & \le L_f(C+1) W_{\RR^n}(\mu, \nu) + L \varepsilon,
    \end{align*}
    where $L_f$ is the Lipschitz constant of the vector field $f$; see \ref{ass:control_system}.
    The arbitrariness of $\varepsilon$ permits to conclude the proof.
\end{proof}

\section{Stability of Trajectories}
\label{sec:stability}

In this section we prove stability of trajectories with respect to measure controls.
More precisely, given a convergent sequence of measure controls,
the corresponding sequence of measure trajectories converges to the trajectory associated to
the limit control.

\begin{theorem}
    \label{tmh:stability}
    Assume~\ref{ass:compact_controls} and~\ref{ass:control_system},
    and fix $\bar \mu \in \PP_c(\RR^n)$.
    Consider a sequence of uniformly
    $\mathcal{W}$-Lipschitz continuous
    measure controls $\tilde{u}_k$, in the sense of \cref{def:unif-Lip-measure-controls}.
    Assume there exists a measure control $\tilde{u}$ such that
    \begin{equation}
        \label{eq:u_k convergence}
        \lim_{k \to + \infty}
        \sup_{\mu \in \PP_c(\RR^n)}\mathcal{W}_{\R^n\times U} (\tilde{u}_k[\mu],\tilde{u}[\mu]) = 0.    
    \end{equation}
    Let $\mu_k$ be a solution to
    $\dot{\mu}=V^{\tilde{u}_k}[\mu]$ with 
    $\mu(0)=\bar \mu$ obtained as limit of LAS,
    and let $\mu$ be a solution to
    $\dot{\mu}=V^{\tilde{u}}[\mu]$ with 
    $\mu(0)=\bar \mu$ obtained as limit of LAS.
    Then, for every $t \ge 0$,
    \begin{equation}
        \label{eq:uniform-limit-trajectories}
        \lim_{k \to + \infty} W_{\RR^n} \left(\mu_k(t), \mu(t)\right) = 0.    
    \end{equation}
\end{theorem}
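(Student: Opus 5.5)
We plan to reduce the statement to a Grönwall-type estimate on the Lattice Approximate Solutions, using the uniform Lipschitz bound \eqref{eq:W-Lip2} and the uniform convergence \eqref{eq:u_k convergence}.

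The first step is to control supports. The constant $C$ of the first part of the proof of the Peano-type theorem depends only on $L_f$, $\diam(U)$ and $\hat f(0,u_0)$, so it is the same for every $\tilde u_k$ and for $\tilde u$. By \cref{thm:existence-MDE}, all the trajectories $\mu_k$ and $\mu$, together with their LAS approximations, are supported in a common ball $B(0,R_T)$ for $t\in[0,T]$, with $R_T$ depending only on $\spt(\bar\mu)$, $C$ and $T$. Moreover, the vector fields $V^{\tilde u_k}$ and $V^{\tilde u}$ satisfy uniform bounds:
\begin{itemize}
\item exactly as in the proof of \cref{thm:well-posedness}, on $\mathcal P^{R_T}_c(\RR^n)$ we have $\mathcal W_{T\RR^n}(V^{\tilde u_k}[\mu],V^{\tilde u_k}[\nu])\le K W_{\RR^n}(\mu,\nu)$ with $K=L_f(C+1)$ independent of $k$;
\item the same plan-pushforward argument, with $T=(f\otimes f)\pushforward S$ applied to near-optimal plans $S$ between $\tilde u_k[\mu]$ and $\tilde u[\mu]$ whose first marginal coupling is the identity, gives
\begin{equation*}
\eta_k\eqdef\sup_{\mu}\mathcal W_{T\RR^n}\big(V^{\tilde u_k}[\mu],V^{\tilde u}[\mu]\big)\le L_f\sup_\mu \mathcal W_{\RR^n\times U}(\tilde u_k[\mu],\tilde u[\mu])\to0 .
\end{equation*}
\end{itemize}
Here we use that the first marginals of $\tilde u_k[\mu]$ and $\tilde u[\mu]$ coincide.

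The second step is the key stability estimate. We compare the LAS schemes for the two fields at the same discretization step $\Delta t$ on the same lattice. One step of the scheme transports mass along velocities, so for $\mu^\Delta_k,\mu^\Delta$ we expect an estimate of the form
\begin{equation*}
W(\mu^\Delta_k((\ell+1)\Delta t),\mu^\Delta((\ell+1)\Delta t))\le (1+K\Delta t)\,W(\mu^\Delta_k(\ell\Delta t),\mu^\Delta(\ell\Delta t))+\Delta t\,\eta_k+\mathcal O(\Delta t\,\Delta v)+\mathcal O(\Delta x).
\end{equation*}
This follows by gluing a plan between the current states with near-optimal plans for the velocity fields, as in the proof of \cite[Theorem~4.1]{piccoli_measure_2019}, splitting $V^{\tilde u_k}[\mu_k]-V^{\tilde u}[\mu]$ into $V^{\tilde u_k}[\mu_k]-V^{\tilde u_k}[\mu]$, which is bounded by $K W$, and $V^{\tilde u_k}[\mu]-V^{\tilde u}[\mu]$, which is bounded by $\eta_k$. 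Iterating gives
\begin{equation*}
W(\mu^\Delta_k(t),\mu^\Delta(t))\le e^{Kt}\big(t\,\eta_k+\text{discretization errors}\big).
\end{equation*}
We then let the discretization parameters go to zero along the subsequences defining $\mu_k$ and $\mu$. Lower semicontinuity of $W$ under the limits, which are uniform in $t$ by \cref{thm:existence-MDE}, yields $W(\mu_k(t),\mu(t))\le t e^{Kt}\eta_k$, and hence \eqref{eq:uniform-limit-trajectories}, even uniformly on $[0,T]$.

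The main obstacle is this one-step comparison of LAS. The fields are Lipschitz only in $\mathcal W$, which involves the optimal-plan-compatible distance, so a plan between the states must be lifted to plans between the velocity measures compatible with the spatial coupling. We will follow the construction in \cite{piccoli_measure_2019}. The risk is that $\mu$ is built from the non-Lipschitz field $V^{\tilde u}$. However, $\tilde u$ inherits the Lipschitz bound in the limit, since
\begin{equation*}
\mathcal W(\tilde u[\mu],\tilde u[\nu])\le \liminf_k\big(2\sup\mathcal W(\tilde u_k,\tilde u)+C W(\mu,\nu)\big),
\end{equation*}
assuming a triangle inequality for $\mathcal W$. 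We also need the discretization errors to be uniform in $k$, which holds because all the constants are uniform. If the direct scheme comparison proves awkward, the fallback is to invoke the Lipschitz semigroup of \cref{thm:well-posedness} together with the standard error estimate, namely that the distance between a semigroup trajectory and an approximate trajectory is controlled by $e^{Kt}$ times the integrated local defect.
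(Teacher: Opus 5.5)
Your proposal has the same skeleton as the paper's proof. Both compare the two Lattice Approximate Solutions on the same lattice, starting from the common datum $\mathcal A_N(\bar\mu)$. Both split each time step through the intermediate measure $\tilde u_k[\mu^N(\ell/N)]$ using the Gluing Lemma. As a result, only the uniform Lipschitz bound of the $\tilde u_k$ and the uniform closeness of $\tilde u_k$ to $\tilde u$ are used.

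The difference is in the one-step bookkeeping, and it matters. The paper bounds $\norm{x+tv-y-tw}_{\RR^n}$ by $\norm{x-y}_{\RR^n}+\norm{v-w}_{\RR^n}$, which drops the factor $t-\ell/N\le 1/N$ in front of the velocity term. Each step then amplifies the distance by $3L_f+1$, and after $TN$ steps by $\mathcal K^N=(3L_f+1)^{TN}$. The paper compensates by requiring $\sup_\mu\mathcal W_{\RR^n\times U}(\tilde u_k[\mu],\tilde u[\mu])<\varepsilon/\mathcal K^N$. This makes the threshold $\bar k$ depend on $N$. The final passage $N\to+\infty$ with $k\ge\bar k$ is therefore not justified by that estimate, unless the LAS converge uniformly in $k$.

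Your factor $1+K\Delta t$, with $K=L_f(C+1)$ independent of $k$, accumulates only to $e^{KT}$ whatever the mesh. So you can let the mesh go to zero at fixed $k$, which gives the quantitative bound $W_{\RR^n}(\mu_k(t),\mu(t))\le t\,e^{Kt}\eta_k$, uniform on $[0,T]$. This is what makes the limit in the mesh legitimate. The paper's cruder bound only saves the effort of tracking the time step.

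Some further remarks.
\begin{itemize}
\item You do not need the Lipschitz continuity of $\tilde u$. Your splitting uses only the bound for $V^{\tilde u_k}$, the composite plan comes from gluing plus the Euclidean triangle inequality, and the existence of $\mu$ is a hypothesis. Incidentally, the triangle inequality you were worried about does hold in that case: gluing plans with diagonal base on both sides of a plan with optimal base yields a plan with optimal base.
\item Because the LAS uses $\Delta x=\Delta t\,\Delta v$, the update $x_i+\Delta t\,v_j$ stays on the lattice. The only per-step error is therefore $\mathcal O(\Delta t\,\Delta v)$, and the spatial error arises only at $t=0$, where it is the same for both schemes.
\item Like the paper, you need the approximations of $\mu_k$ and of $\mu$ to converge along a common sequence of meshes, or else uniqueness of the LAS limit. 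Otherwise the comparison at equal mesh size does not pass to the limits.
\end{itemize}
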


\begin{proof}
    Since $\tilde u_k$ is a sequence of uniformly $\mathcal{W}$-Lipschitz
    continuous measure controls, then every $\tilde u_k$ ($k \in \NN$)
    is $\mathcal{W}$-Lipschitz continuous in the sense 
    of \cref{def:regularity_measure_control}.
    Moreover, using also assumption~\eqref{eq:u_k convergence}, we can deduce that
    $\tilde u$ is $\mathcal{W}$-Lipschitz continuous too.
    Hence, by~\ref{ass:compact_controls} and~\ref{ass:control_system},
    $\tilde u$ and, for every $k \in \NN$, $\tilde u_k$ both satisfy all the
    requirements of \cref{thm:well-posedness}, so that the trajectories $\mu$ and
    $\mu_k$ are well-defined.
    
    Fix $\varepsilon > 0$ and $T > 0$. 
    Denote with $L_f$ the Lipschitz constant of the vector field $f$, see \ref{ass:control_system},    
    and call $\mathcal K \eqdef (3L_f+1)^T$.
    We approximate the solutions of the measure valued equations using the 
    \textsl{Lattice Approximate Solution} (LAS) technique; see~\cite[Definition~3.1]{piccoli_measure_2019}. 
    To this aim, fix $N \in \mathbb{N} \setminus\{0\}$.     
    By~\eqref{eq:u_k convergence}, there exists $\bar k$
    such that
    \begin{equation}
        \label{eq:u_k_epsilon}
        \mathcal{W}_{\R^n\times U} (\tilde{u}_k[\mu],\tilde{u}[\mu]) < \frac{\varepsilon}{\mathcal K^N}
    \end{equation}
    for every $k \ge \bar k$ and $\mu \in \mathcal{P}_c(\RR^n)$. 
    
    Fix $k \ge \bar k$ and define the approximated initial conditions as
    \begin{equation}
        \mu^N_k(0) \eqdef \mathcal A_N(\bar \mu)
        \qquad \text{ and } \qquad
        \mu^N(0) \eqdef \mathcal A_N(\bar \mu),
    \end{equation}
    where the operator $\mathcal{A}_N\colon \mathcal{P}_c(\RR^n) \to \mathcal{P}_c(\RR^n)$ is defined by
    \begin{equation}
        \mathcal{A}_N (\mu) = \sum_i \mu(x_i + Q) \delta_{x_i}
    \end{equation}
    for every $\mu \in \mathcal{P}_c(\RR^n)$, where $\delta_{x_i}$ is the Dirac delta centered in $x_i$,
    $Q = {\left[0, \frac{1}{N^2}\right[\,}^n$ and
    $x_i \in \frac{\mathbb{Z}^n}{N^2} \cap {[-N, N]}^n$; see~\cite[Eq.~(6)]{piccoli_measure_2019}.
    Note that $\mu^N_k(0) = \mu^N(0)$.

    We construct two time-dependent measures $\mu_k^N$ and $\mu^N$ recursively on the time interval $[0, T]$.

    Assume first that $t \in \left[0, \frac{1}{N}\right]$ and define
    \begin{align*}
        \mu_k^N \left(t\right)
        & \eqdef \sum_i \sum_j m_{ij}^v \left(V^{\tilde{u}_k}[\mu_k^N(0)]\right) \delta_{x_i + t v_j}
        \\
        \mu^N \left(t\right)
        & \eqdef \sum_i \sum_j m_{ij}^v \left(V^{\tilde{u}}[\mu^N(0)]\right) \delta_{x_i + t v_j},
    \end{align*}
    where
    \begin{align*}
        m_{ij}^v \left(V^{\tilde{u}_k}[\mu_k^N(0)]\right) 
        & =
        V^{\tilde{u}_k}[\mu_k^N(0)] \left(\left\{(x_i, v) \in T_{x_i}\RR^n \colon v \in v_j + Q'\right\}\right)
        \\
        m_{ij}^v \left(V^{\tilde{u}}[\mu^N(0)]\right) 
        & =
        V^{\tilde{u}}[\mu^N(0)] \left(\left\{(x_i, v) \in T_{x_i}\RR^n \colon v \in v_j + Q'\right\}\right)
    \end{align*}
    and $Q' = {\left[0, \frac{1}{N}\right[\,}^n$.
    For $t \in \left[0, \frac{1}{N}\right]$,
    we estimate $W_{\RR^n}\left(\mu_k^N \left(t\right), \mu^N \left(t\right)\right)$.
    By~\eqref{eq:u_k_epsilon}, there exists a plan 
    $\widetilde T^k \in \plan{\tilde{u}_k[\mu^N_k(0)]}{\tilde{u}[\mu^N(0)]}$
    such that the pushforward $\pi_{1, 3} \pushforward \widetilde{T}^k \in \optplan{\mu^N_k(0)}{\mu^N(0)}$ and
    \begin{equation}
        \label{eq:W-distance-1}
        \int_{(\RR^n \times U)^2} d_{\mathcal U}\left(u_1, u_2\right) 
        \dd{} \widetilde{T}^k (x, u_1, y, u_2) < \frac{\varepsilon}{\mathcal K^N}.
    \end{equation}
    For $t \in \left[0, \frac{1}{N}\right]$, define the map $Y_t\colon T\RR^n \times T\RR^n \to \RR^n \times \RR^n$
    as $Y_t ((x, v), (y, w)) = (x + t v, y + t w)$. Moreover, define
    $\widehat T^k \eqdef Y_t \pushforward (f \otimes f) \pushforward \widetilde{T}^k$. Clearly
    $\widehat{T}^k \in \plan{\mu^N_k(t)}{\mu^N(t)}$ and so, using~\eqref{eq:W-distance-1}
    and~\ref{ass:control_system},
    \begin{align}
        \nonumber
        & \quad W_{\RR^n}\left(\mu^N_k(t), \mu^N(t)\right)
        \\
        \nonumber
        & \le \int_{\RR^n \times \RR^n} \norm{x-y}_{\RR^n} \dd{} \widehat{T}^k(x, y)
        \\
        \nonumber
        & = \int_{T\RR^n \times T\RR^n} \norm{x + t v - y - t w}_{\RR^n} \dd{} ((f \otimes f) \pushforward \widetilde{T}^k)(x, v, y, w)
        \\
        \nonumber
        & \le \int_{T\RR^n \times T\RR^n} \left[\norm{x-y}_{\RR^n} + \norm{v-w}_{\RR^n}\right] 
        \dd{} ((f \otimes f) \pushforward \widetilde{T}^k)(x, v, y, w)
        \\
        \nonumber
        & = \int_{(\RR^n \times U)^2} \left[\norm{x-y}_{\RR^n} + \norm{\hat f(x, u_1) - \hat f(y, u_2)}_{\RR^n}\right] 
        \dd{} \widetilde{T}^k(x, u_1, y, u_2)
        \\
        \nonumber
        & \le \int_{(\RR^n \times U)^2} \left[(L_f+1)\norm{x-y}_{\RR^n} 
        + L_f d_{\mathcal{U}} (u_1, u_2)\right] 
        \dd{} \widetilde{T}^k(x, u_1, y, u_2)
        \\
        \nonumber
        & = (L_f+1) W_{\RR^n}(\mu_k^N(0), \mu^N(0)) + \frac{\varepsilon L_f}{\mathcal K^N}
        \\
        \label{eq:estimate-first-step}
        & = \frac{\varepsilon L_f}{\mathcal K^N}.
    \end{align}

    Assume now that the approximated LAS measures $\mu^N_k$ and $\mu^N$
    are defined in the time interval $\left[0, \frac{\ell}{N}\right]$ for some $\ell \in \mathbb{N}\setminus\{0\}$.
    We extend now these measures to the time interval $\left[0, \frac{\ell+1}{N}\right]$.
    For $t \in \left[\frac{\ell}{N}, \frac{\ell+1}{N}\right]$ we define
    \begin{align*}
        \mu_k^N \left(t\right)
        & \eqdef \sum_i \sum_j m_{ij}^v \left(V^{\tilde{u}_k}\left[\mu_k^N\left(\frac{\ell}{N}\right)\right]\right) 
        \delta_{x_i + t v_j}
        \\
        \mu^N \left(t\right)
        & \eqdef \sum_i \sum_j m_{ij}^v \left(V^{\tilde{u}}\left[\mu^N\left(\frac{\ell}{N}\right)\right]\right) 
        \delta_{x_i + t v_j},
    \end{align*}
    where
    \begin{align*}
        m_{ij}^v \left(V^{\tilde{u}_k}\left[\mu_k^N\left(\frac{\ell}{N}\right)\right]\right) 
        & =
        V^{\tilde{u}_k}\left[\mu_k^N\left(\frac{\ell}{N}\right)\right]
        \left(\left\{(x_i, v) \in T_{x_i}\RR^n \colon v \in v_j + Q'\right\}\right)
        \\
        m_{ij}^v \left(V^{\tilde{u}}\left[\mu^N\left(\frac{\ell}{N}\right)\right]\right) 
        & =
        V^{\tilde{u}}\left[\mu^N\left(\frac{\ell}{N}\right)\right]
        \left(\left\{(x_i, v) \in T_{x_i}\RR^n \colon v \in v_j + Q'\right\}\right)
    \end{align*}
    and $Q' = {\left[0, \frac{1}{N}\right[\,}^n$.
    
    Define $\widetilde{S}_k \in \plan{\tilde{u}_k\left[\mu_k^N\left(\frac{\ell}{N}\right)\right]}
    {\tilde{u}_k\left[\mu^N\left(\frac{\ell}{N}\right)\right]}$ as a plan such that
    $\pi_{1, 3} \pushforward \widetilde{S}_k \in \optplan{\mu_k^N\left(\frac{\ell}{N}\right)}{\mu^N\left(\frac{\ell}{N}\right)}$
    and
    \begin{equation}
        \label{eq:estimate-Sk}
        \int_{(\RR^n \times U)^2} d_{\mathcal U} \left(u_1, u_2\right)
        \dd{} \widetilde{S}_k(x, u_1, y, u_2)
        \le 
        W_{\RR^n}\left(\mu_k^N\left(\frac{\ell}{N}\right), \mu^N\left(\frac{\ell}{N}\right)\right) 
        + \frac{\varepsilon}{\mathcal K^N}.    
    \end{equation}

    By~\eqref{eq:u_k_epsilon}, 
    there exists a plan $\widetilde{T}_k \in \plan{\tilde{u}_k\left[\mu^N\left(\frac{\ell}{N}\right)\right]}
    {\tilde{u}\left[\mu^N\left(\frac{\ell}{N}\right)\right]}$ such that
    $\pi_{1, 3} \pushforward \widetilde{T}_k \in \optplan{\mu^N\left(\frac{\ell}{N}\right)}{\mu^N\left(\frac{\ell}{N}\right)}$
    and
    \begin{equation}
        \label{eq:estimate-Tk}
        \int_{(\RR^n \times U)^2} d_{\mathcal U} \left(u_1, u_2\right)
        \dd{} \widetilde{T}_k(x, u_1, y, u_2) < \frac{\varepsilon}{\mathcal K^N}.
    \end{equation}

    By Gluing Lemma (see~\cite[Lemma~5.3.2]{ambrosio-gigli-savare-book} or~\cite[Chapter~1]{villani2008optimal}),
    there exists $\widetilde{R}_k \in \mathcal{P}\left((\RR^n \times U)^3\right)$ such that
    $\pi_{1,2} \pushforward \widetilde{R}_k = \widetilde{S}_k$, $\pi_{2,3} \pushforward \widetilde{R}_k = \widetilde{T}_k$,
    and $\pi_{1,3} \pushforward \widetilde{R}_k \in \plan{\tilde{u}_k\left[\mu^N_k\left(\frac{\ell}{N}\right)\right]}
    {\tilde{u}\left[\mu^N\left(\frac{\ell}{N}\right)\right]}$.
    Finally, define
    $\widehat Z^k \eqdef Y_t \pushforward (f \otimes f) \pushforward \pi_{1, 3} \pushforward \widetilde{R}^k$. Clearly
    $\widehat{Z}^k \in \plan{\mu^N_k\left(\frac{\ell}{N}\right)}{\mu^N\left(\frac{\ell}{N}\right)}$ and so,
    using~\eqref{eq:estimate-Sk}, \eqref{eq:estimate-Tk}, and~\ref{ass:control_system},
    \begin{align}
        \nonumber
        & \quad
        W_{\RR^n}\left(\mu^N_k(t), \mu^N(t)\right)
        \\
        \nonumber
        & \le \int_{\RR^n \times \RR^n} \norm{x-y}_{\RR^n} \dd{} \widehat{Z}^k(x, y)
        \\
        \nonumber
        & = \int_{T\RR^n \times T\RR^n} \norm{x + t v - y - t w}_{\RR^n} 
        \dd{} ((f \otimes f) \pushforward \pi_{1, 3} \pushforward 
        \widetilde{R}^k)(x, v, y, w)
        \\
        \nonumber
        & \le \int_{T\RR^n \times T\RR^n} \left[\norm{x-y}_{\RR^n} + \norm{v-w}_{\RR^n}\right]
        \dd{} ((f \otimes f) \pushforward \pi_{1, 3} \pushforward
        \widetilde{R}^k)(x, v, y, w)
        \\
        \nonumber
        & = \int_{(\RR^n \times U)^2} \left[\norm{x-y}_{\RR^n} 
        + \norm{\hat f(x, u_1) - \hat f(y, u_2)}_{\RR^n}\right]
        \dd{} (\pi_{1, 3} \pushforward
        \widetilde{R}^k)(x, u_1, y, u_2)
        \\
        \nonumber
        & \le \int_{(\RR^n \times U)^2} \left[(L_f+1) \norm{x-y}_{\RR^n}
        + L_f d_{\mathcal{U}} (u_1, u_2)\right] 
        \dd{} (\pi_{1, 3} \pushforward \widetilde{R}^k) (x, u_1, y, u_2)
        \\
        \nonumber
        & \le \int_{(\RR^n \times U)^3} \left[(L_f+1) \norm{x-z}_{\RR^n}
        + L_f d_{\mathcal{U}} (u_1, u_3)\right] 
        \dd{} \widetilde{R}^k (x, u_1, z, u_3, y, u_2)
        \\
        \nonumber
        & \quad + \int_{(\RR^n \times U)^3} \left[(L_f+1) \norm{z-y}_{\RR^n}
        + L_f d_{\mathcal{U}} (u_3, u_2)\right] 
        \dd{} \widetilde{R}^k (x, u_1, z, u_3, y, u_2)
        \\
        \nonumber
        & = \int_{(\RR^n \times U)^2} \left[(L_f+1) \norm{x-z}_{\RR^n}
        + L_f d_{\mathcal{U}} (u_1, u_3)\right] 
        \dd{} \widetilde{S}^k (x, u_1, z, u_3)
        \\
        \nonumber
        & \quad + \int_{(\RR^n \times U)^2} \left[(L_f+1) \norm{z-y}_{\RR^n}
        + L_f d_{\mathcal{U}} (u_3, u_2)\right] 
        \dd{} \widetilde{T}^k (z, u_3, y, u_2)
        \\
        \nonumber
        & < 2 (L_f+1) W_{\RR^n} \left(\mu_k^N\left(\frac{\ell}{N}\right), \mu^N\left(\frac{\ell}{N}\right)\right)
        \\
        \nonumber
        & \quad
        + L_f W_{\RR^n} \left(\mu_k^N\left(\frac{\ell}{N}\right), \mu^N\left(\frac{\ell}{N}\right)\right) 
        + \frac{2 L \varepsilon}{\mathcal K^N}
        \\
        \label{eq:W-recursive}
        & < (3L_f + 1) W_{\RR^n}\left(\mu_k^N\left(\frac{\ell}{N}\right), \mu^N\left(\frac{\ell}{N}\right)\right) 
        + \frac{2L\varepsilon}{\mathcal K^N}
    \end{align}
    for every $t \in \left[\frac{\ell}{N}, \frac{\ell+1}{N}\right]$.

    Using~\eqref{eq:estimate-first-step} and the recursive estimate~\eqref{eq:W-recursive},
    for every $t \in \left[\frac{\ell}{N}, \frac{\ell+1}{N}\right]$, we have that
    \begin{align*}
        W_{\RR^n}\left(\mu^N_k(t), \mu^N(t)\right) 
        & < (3L_f+1)^\ell W_{\RR^n}\left(\mu^N_k\left(\frac{1}{N}\right), 
        \mu^N\left(\frac{1}{N}\right)\right)
        \\
        & \quad
        + \frac{2L_f \varepsilon}{\mathcal K^N} \sum_{i=0}^{\ell - 1}(3L+1)^i
        \\
        & < (3L_f+1)^\ell \frac{L_f \varepsilon}{\mathcal K^N} 
        + \frac{2L_f \varepsilon}{\mathcal K^N} \sum_{i=0}^{\ell - 1}(3L_f+1)^i
        \\
        & \le \frac{2L_f \varepsilon}{\mathcal K^N} \sum_{i=0}^{\ell}(3L_f+1)^i
        \\
        & = \frac{2 \varepsilon}{\mathcal K^N} \, \frac{(3L_f+1)^{\ell + 1} - 1}{3}.
    \end{align*}
    Since $t \le T$, then the index $\ell$ is lower than or equal to $TN - 1$. Thus,
    for every $t \in [0, T]$,
    \begin{equation}
        W_{\RR^n}\left(\mu^N_k(t), \mu^N(t)\right) < \frac{2 \varepsilon \mathcal K^N}{3 \mathcal K^N} 
        = \frac{2 \varepsilon}{3} .
    \end{equation}
    The measures $\mu^N_k$ and $\mu^N$ converge uniformly on $[0, T]$ as $N \to +\infty$ with respect
    to the Wasserstein metric;
    see~\cite[Proof of Theorem~3.1]{piccoli_measure_2019}.
    Denote respectively $\mu_k$ and $\mu$ the limits. Hence,
    for every $t \in [0, T]$ and $k \ge \bar k$,
    \begin{equation}
        W_{\RR^n}\left(\mu_k(t), \mu(t)\right) < \varepsilon,
    \end{equation}    
    which implies~\eqref{eq:uniform-limit-trajectories} for every $t \in [0, T]$
    and, since $T$ is arbitrary, for every $t \ge 0$.
\end{proof}

\section{Closure of Trajectories}
\label{sec:closure}
In this section, we prove that the set of trajectories is closed.
More precisely, given a sequence of uniformly $\mathcal W$-Lipschitz
continuous measure controls (in the sense of \cref{def:unif-Lip-measure-controls}),
then the corresponding trajectories converge to a solution of a suitable
measure valued equation. 

\begin{theorem}
    \label{thm:closure}
    Assume~\ref{ass:compact_controls} and~\ref{ass:control_system},
    and fix $\bar \mu \in \PP_c(\RR^n)$ and a compact set $K \subseteq \R^n$ such that
    $\spt(\bar \mu) \subseteq K$.
    Consider a sequence of uniformly
    $\mathcal{W}$-Lipschitz continuous
    measure controls $\tilde{u}_k$, in the sense of \cref{def:unif-Lip-measure-controls}.

    Let $\mu_k$ be a solution to
    $\dot{\mu}=V^{\tilde{u}_k}[\mu]$ with 
    $\mu(0) = \bar \mu$ obtained as limit of LAS.
    Assume moreover that
    $\spt(\mu_k) \subseteq K$
    for every $k \in \N$.

    Suppose there exists a map
    $t \mapsto \mu(t) \in \PP(\R^n)$ such that
    $W_{\RR^n}(\mu_k(t), \mu(t)) \to 0$ as
    $k \to + \infty$ uniformly in $t$, i.e.
    \begin{equation}
        \label{eq:W-uniform-control}
        \lim_{k \to + \infty} \, \sup_{t \ge 0}\, 
        W_{\RR^n}(\mu_k(t), \mu(t)) = 0.
    \end{equation}

    Then, the map $t \mapsto \mu(t)$
    is a solution to $\dot \mu = V[\mu]$, $\mu(0) = \bar \mu$,
    for some measure vector field $V$.
\end{theorem}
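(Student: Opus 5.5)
Because the conclusion only asks for \emph{some} measure vector field $V$, I would build $V$ as a limit of the fields $V^{\tilde u_k}$ and then pass to the limit in the weak formulation of the measure differential equation. In \cite{piccoli_measure_2019}, $\mu$ solves $\dot\mu = V[\mu]$ when $t\mapsto\mu(t)$ is Lipschitz in $W_{\RR^n}$ and, for every $\varphi\in C^\infty_c(\RR^n)$,
\begin{equation*}
\frac{d}{dt}\int_{\RR^n}\varphi \dd{}\mu(t) = \int_{T\RR^n}\nabla\varphi(x)\cdot v \dd{}(V[\mu(t)])(x,v)
\end{equation*}
for a.e.\ $t$.

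The steps, in order, are as follows.

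\begin{enumerate}
\item \emph{Uniform bounds.} By \ref{ass:compact_controls}, \ref{ass:control_system} and the sublinearity estimate in the proof of the Peano-type theorem, every $V^{\tilde u_k}[\mu_k(t)]$ is supported in $\{(x,v)\colon x\in K,\ v\in F(x)\}$. This set is compact and independent of $k$, since $F$ is Hausdorff continuous by \cref{prop:multi_Hausdorff} and $K$ is compact.

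\item \emph{Passage to the limit in the dynamics.} By \cref{thm:existence-MDE}, the $\mu_k$ are Lipschitz in time with a common constant, and \eqref{eq:W-uniform-control} transfers this Lipschitz bound to $\mu$. The measures $V^{\tilde u_k}[\mu_k(t)]$ lie in a fixed compact subset of $\PP(T\RR^n)$. The maps $t\mapsto V^{\tilde u_k}[\mu_k(t)]$ are not continuous in $t$ in general, so I would use the space-time measures $\dd t\otimes V^{\tilde u_k}[\mu_k(t)]$ on $[0,T]\times T\RR^n$ and take a narrow limit along a subsequence by Prokhorov's theorem. Disintegrating that limit in $t$ gives a measurable family $t\mapsto \Lambda_t\in\PP(T\RR^n)$. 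Its first marginal is $\mu(t)$ for a.e.\ $t$, because $\pi_1\pushforward V^{\tilde u_k}[\mu_k(t)]=\mu_k(t)\to\mu(t)$ uniformly. Its support lies in the graph of $F$, which is closed.

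\item \emph{Limit in the weak formulation.} For each $\varphi$, the integrated identity $\int\varphi\dd{}\mu_k(t)-\int\varphi\dd{}\bar\mu=\int_0^t\int\nabla\varphi\cdot v\dd{}V^{\tilde u_k}[\mu_k(s)]\dd s$ passes to the limit. The integrand $\nabla\varphi(x)\cdot v$ is continuous and bounded on the compact support, so narrow convergence suffices. This gives the identity with $\Lambda_s$ in place of $V[\mu(s)]$.

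\item \emph{Definition of $V$.} Set $V[\mu(t)]\eqdef\Lambda_t$ along the trajectory and $V[\nu]\eqdef\nu\otimes\delta_0$ elsewhere. Then $\pi_1\pushforward V[\nu]=\nu$ for every $\nu$, so $V$ is a measure vector field.
\end{enumerate}

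The main obstacle is step 4 when the trajectory self-intersects, that is, when $\mu(t_1)=\mu(t_2)$ while $\Lambda_{t_1}\neq\Lambda_{t_2}$; then $V$ is not well defined as a function of the measure. My first fix would be to choose, on each level set $\{t\colon \mu(t)=\nu\}$, a single representative; the natural choice is the average of $\Lambda_t$ over that set. One must then check that the weak equation still holds a.e., using that on a level set of positive measure $\frac{d}{dt}\int\varphi\dd{}\mu=0$ a.e. If this bookkeeping fails, the fallback is to interpret the statement with a time-dependent field $V_t$, which is exactly what the disintegration in step 2 produces.
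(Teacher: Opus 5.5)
\textbf{Gap: Step 4 cannot produce an autonomous field.} Your Steps 1--3 are sound, but they only yield a \emph{time-dependent} velocity family $\Lambda_t$. Step 4 must turn this into an autonomous map $V\colon\PP(\RR^n)\to\PP(T\RR^n)$, and that is where the argument fails. Averaging over a level set $\{t\colon\mu(t)=\nu\}$ helps only when that set has positive measure. Conflicts can instead sit on Lebesgue-null level sets that together fill a set of times of positive measure, and then no choice of $V[\nu]$ is consistent.

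Here is a concrete case. Take $\dot x=u$ in $\RR^2$ with $U$ the closed unit ball, and $\mu_k(t)=\delta_{(\sin t,(1-\cos t)/k)}$. Each $\mu_k$ is generated by a Lipschitz feedback along an ellipse, with Lipschitz constant of order $k$. These curves stay in a fixed compact set and converge uniformly to $\mu(t)=\delta_{(\sin t,0)}$. Every MVF satisfies $V[\delta_p]=\delta_p\otimes\nu_p$, so $\int\nabla\varphi\cdot v\,\dd{}V[\delta_p]=\nabla\varphi(p)\cdot\int v\,\dd{}\nu_p$. Choose $\nabla\varphi=(1,0)$ near the segment $[-1,1]\times\{0\}$. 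The weak equation would then force the first component of $\int v\,\dd{}\nu_{(\sin t,0)}$ to equal both $\cos t$ and $\cos(\pi-t)=-\cos t$ for a.e.\ $t\in(0,\pi/2)$. So this $\mu$ solves no autonomous MDE, even though your Steps 1--3 apply to it verbatim. Your fallback with a time-dependent $V_t$ proves a strictly weaker statement than the one claimed.

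\textbf{Missing idea: use the uniform Lipschitz hypothesis.} The example shows that the hypothesis that the $\tilde u_k$ are \emph{uniformly} $\mathcal W$-Lipschitz is indispensable, and your argument never uses it. It has to enter exactly at Step 4. The paper uses it by taking compactness in the space of maps rather than along the trajectory.

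\begin{enumerate}
\item Let $X=\{\eta\in\PP(\RR^n)\colon\spt(\eta)\subseteq K\}$ and take $\mu^1,\mu^2\in X$. Push a near-optimal plan for $\mathcal W_{\RR^n\times U}(\tilde u_k[\mu^1],\tilde u_k[\mu^2])$ forward by $f\otimes f$. This gives $W_{T\RR^n}(V^{\tilde u_k}[\mu^1],V^{\tilde u_k}[\mu^2])\le C'W_{\RR^n}(\mu^1,\mu^2)$ with $C'$ independent of $k$.
\item $X$ is compact, and all values lie in the compact set of probability measures supported in $f(K\times U)$. Arzel\`a--Ascoli then gives a subsequence with $V^{\tilde u_{k_h}}\to V$ uniformly on $X$.
\item For every $t$, $W_{T\RR^n}(V^{\tilde u_{k_h}}[\mu_{k_h}(t)],V[\mu(t)])\le C'W_{\RR^n}(\mu_{k_h}(t),\mu(t))+\sup_{\eta\in X}W_{T\RR^n}(V^{\tilde u_{k_h}}[\eta],V[\eta])$, and the right-hand side tends to $0$.
\item Dominated convergence in the integrated weak formulation then yields $\dot\mu=V[\mu]$ with an autonomous $V$.
\end{enumerate}

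Self-intersections never arise in this route. Along a common subsequence, your $\Lambda_t$ coincides with $V[\mu(t)]$ for a.e.\ $t$.
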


\begin{proof}
    Since $\tilde u_k$ is a uniformly $\mathcal W$-Lipschitz continuous sequence
    of measure controls, then there exists a constant $C>0$ such that,
    for every $\mu, \nu \in \PP(\R^n)$ and $k \in \N$,
    \begin{equation}
        \label{eq:Lip-control-uniform}
        \mathcal{W}_{\R^n \times U} (\tilde{u}_k[\mu], \tilde{u}_k[\nu]) 
        \le C W_{\RR^n}(\mu, \nu).
    \end{equation}

    \noindent
    Fix $g \in C_c^\infty(\RR^n)$.
    We split the proof in several steps.\\

    \noindent
    \textbf{Step 1: $\mu(0) = \bar \mu$}.\\ 
    For every $k \in \NN$, since $\mu_k$ satisfies $\mu_k(0) = \bar \mu$ by assumptions,
    then 
    \begin{align*}
        W_{\RR^n}\left(\mu(0), \bar \mu\right)
        & \le W_{\RR^n}\left(\mu(0), \mu_k(0)\right) + W_{\RR^n}\left(\mu_k(0), \bar \mu\right)
        = W_{\RR^n}\left(\mu(0), \mu_k(0)\right)
        \\
        & \le \sup_{t \ge 0} W_{\RR^n}\left(\mu(t), \mu_k(t)\right).
    \end{align*}
    Passing to the limit as $k \to +\infty$ in the previous inequality,  
    since~\eqref{eq:W-uniform-control}, we deduce that
    $W_{\RR^n}\left(\mu(0), \bar \mu\right) = 0$, proving the claim.\\

    \noindent
    \textbf{Step 2: the map $t \mapsto \int_{\RR^n} g(x) d (\mu(t))(x)$ is absolutely continuous}.\\
    For every $k \in \mathbb{N}$,
    since $\mu_k$ is a solution to $\dot \mu = V^{\tilde u_k}[\mu]$, 
    using~\eqref{eq:mfv-given-tilde-u} and $\spt(\mu_k) \subseteq K$,
    we obtain that
    \begin{align}
        \nonumber
        \abs{\frac{d}{d t} \int_{\RR^n} g(x) \dd{} (\mu_k(t))(x)}
        & = \abs{\int_{T\RR^n} \left(\nabla g(x) \cdot v\right)\,  \dd{} \left(V^{\tilde u_k}[\mu_k(t)]\right) (x, v) }
        \\
        \nonumber
        & = \abs{\int_{T\RR^n} \left(\nabla g(x) \cdot v\right)\,  
        \dd{} \left(f \pushforward \tilde u_k [\mu_k(t)]\right) (x, v) }
        \\
        \nonumber
        & = \abs{\int_{\RR^n \times U} \left(\nabla g(x) \cdot \hat f(x, u)\right)\,  
        \dd{} \left(\tilde u_k [\mu_k(t)]\right) (x, v) }
        \\
        \nonumber
        & = \abs{\int_{K \times U} \left(\nabla g(x) \cdot \hat f(x, u)\right)\,  
        \dd{} \left(\tilde u_k [\mu_k(t)]\right) (x, v) }
        \\
        \label{eq:2}
        & \le \norm{\nabla g}_{L^\infty(K)}
        \norm{\hat f}_{L^\infty(K \times U)}.
    \end{align}
    By \ref{ass:control_system}, we deduce that, for every $k \in \NN$, the map
    \begin{equation*}
        t \mapsto \int_{\RR^n} g(x) \dd{} (\mu_k(t))(x)
    \end{equation*}
    is uniformly Lipschitz continuous with Lipschitz constant bounded by the constant
    $\norm{\nabla g}_{L^\infty(K)} \norm{\hat f}_{L^\infty(K \times U)}$, which does not
    depend on $k$.
    Hence, by~\eqref{eq:W-uniform-control}, also the map
    \begin{equation*}
        t \mapsto \int_{\RR^n} g(x) \dd{} (\mu(t))(x)
    \end{equation*}
    is Lipschitz continuous, and, consequently, absolutely continuous.\\

    \noindent
    \textbf{Step 3: compactness for $V^{\tilde u_k}$}.\\
    Define the metric spaces $\left(X, W_{\RR^n}\right)$ and $(Y, W_{T\RR^n})$, where
    \begin{equation*}
        \begin{split}
            X & \eqdef \left\{\eta \in \PP(\RR^n) \colon \spt(\eta) \subseteq K\right\},
            \\
            Y & \eqdef \left\{\eta \in \PP(T\RR^n) \colon \spt(\eta) \subseteq f\left(K, U\right)\right\}.
        \end{split}        
    \end{equation*}
    Consider now the sequence of measure vector fields
    $V^{\tilde u_k} \colon \PP(\RR^n) \to \PP(T\RR^n)$
    and note that, restricting the domain to $X$, we can assume that
    $V^{\tilde u_k} \colon X \to Y$.

    Fix $\mu^1, \mu^2 \in X$ and $k \in \NN$. 
    Take $S \in \optplan{\tilde u_k[\mu^1]}{\tilde u_k[\mu^2]}$ and consider
    $R = (f \times f) \pushforward S$, which is a transference plan for 
    $V^{\tilde{u}_k}[\mu^1]$ and 
    $V^{\tilde{u}_k}[\mu^2]$. Hence, using~\cite[Lemma~4.1]{piccoli_measure_2019}
    and~\eqref{eq:Lip-control-uniform},
    \begin{align}
        \nonumber
        \quad W_{T\RR^n} \left(V^{\tilde{u}_k}[\mu^1], V^{\tilde{u}_k}[\mu^2]\right)
        & \le \int_{T\RR^n \times T\RR^n} d_{T\RR^n} \left((x, v), (y, w)\right) \dd{}R(x, v, y, w)
        \\
        \nonumber
        & = \int_{(\RR^n \times U)^2} d_{\RR^n \times U} \left((x, u_1), (y, u_2)\right) \dd{}S(x, u_1, y, u_2)
        \\
        \nonumber
        & = W_{\RR^n \times U} \left(\tilde u_k[\mu^1], \tilde u_k[\mu^2]\right)
        \\
        \nonumber
        & \le \mathcal W_{\RR^n \times U} \left(\tilde u_k[\mu^1], \tilde u_k[\mu^2]\right)
        + W_{\RR^n}\left(\mu^1, \mu^2\right)
        \\
        \label{eq:3}
        & \le (C+1) W_{\RR^n}\left(\mu^1, \mu^2\right).
    \end{align}
    This proves that, for every $k \in \NN$, $V^{\tilde u_k} \in C^0\left(X; Y\right)$ 
    and that the sequence $V^{\tilde u_k}$ is equicontinuous.

    Given $\mu^1 \in X$, consider the set
    \begin{equation*}
        \mathcal F_{\mu^1} \eqdef \left\{V^{\tilde u_k} [\mu^1] \colon k \in \NN\right\}.
    \end{equation*}
    Clearly, every element of $\mathcal{F}_{\mu^1}$ is a probability measure with support
    contained in the compact set $f(K, U)$. This implies that $\mathcal{F}_{\mu^1}$ is tight;
    see~\cite[Formula~(5.1.8)]{ambrosio-gigli-savare-book}. Hence, Prokhorov Theorem 
    (see~\cite[Theorem~5.1.3]{ambrosio-gigli-savare-book}) implies that $\mathcal{F}_{\mu^1}$
    is relatively compact in $Y$. 
    Thus, applying Ascoli-Arzelà Theorem (see~\cite[Theorem~6.1, Chapter~7]{munkres1975topology}
    or~\cite[Theorem~47.1]{munkres2013topology}),
    there exists $V \in C^0\left(X; Y\right)$ and a subsequence 
    $\tilde u_{k_h}$ such that
    \begin{equation}
        \label{eq:5}
        \lim_{h \to +\infty} \sup_{\mu \in X}
        W_{T\RR^n} \left(V^{\tilde u_{k_h}}[\mu], V[\mu]\right) = 0.
    \end{equation}

    \noindent
    \textbf{Step 4: convergence of $\int_{T\R^n} \left(\nabla g(x) \cdot v\right)\, 
    \dd{} V^{\tilde u_{k_h}}[\mu_{k_h}(t)](x, v)$}.\\
    Using~\cite[Formula~(7.1.2)]{ambrosio-gigli-savare-book}
    and~\eqref{eq:3}, for $t \ge 0$, we have
    \begin{align}
        \nonumber
        & \quad \norm{\int_{T\R^n} \nabla g(x) \cdot v\, \dd{} V^{\tilde u_{k_h}}[\mu_{k_h}(t)](x, v)
        -\int_{T\R^n} \nabla g(x) \cdot v\, \dd{} V[\mu(t)](x, v)}
        \\
        \nonumber
        & \le \norm{\int_{T\R^n} \nabla g(x) \cdot v\, \dd{} V^{\tilde u_{k_h}}[\mu_{k_h}(t)](x, v)
        -\int_{T\R^n} \nabla g(x) \cdot v\, \dd{} V^{\tilde u_{k_h}}[\mu(t)](x, v)}
        \\
        \nonumber
        & \quad + \norm{\int_{T\R^n} \nabla g(x) \cdot v\, \dd{} V^{\tilde u_{k_h}}[\mu(t)](x, v)
        -\int_{T\R^n} \nabla g(x) \cdot v\, \dd{} V [\mu(t)](x, v)}
        \\
        \nonumber
        & \le \lip(\nabla g(x) \cdot v) W_{T\RR^n} \left(V^{\tilde u_{k_h}}[\mu_{k_h}(t)], 
        V^{\tilde u_{k_h}}[\mu(t)]\right)
        \\
        \nonumber
        & \quad + \lip(\nabla g(x) \cdot v) W_{T\RR^n} \left(V^{\tilde u_{k_h}}[\mu (t)], 
        V [\mu(t)]\right)
        \\
        \nonumber
        & \le \lip(\nabla g(x) \cdot v) (C+1) W_{\RR^n} \left(\mu_{k_h}(t), \mu(t)\right)
        \\
        \label{eq:4}
        & \quad + \lip(\nabla g(x) \cdot v) W_{T\RR^n} \left(V^{\tilde u_{k_h}}[\mu (t)], 
        V [\mu(t)]\right),
    \end{align}
    where $\lip(\nabla g(x) \cdot v)$ denotes the Lipschitz constant of the map
    $(x, v) \mapsto \nabla g(x) \cdot v$.

    Using assumption~\eqref{eq:W-uniform-control} and the property~\eqref{eq:5},
    and passing to the limit as $h \to +\infty$ in~\eqref{eq:4}, we deduce that
    \begin{align*}
        \lim_{h \to +\infty}
        \int_{T\R^n} \left(\nabla g(x) \cdot v\right)\, \dd{} V^{\tilde u_{k_h}}[\mu_{k_h}(t)](x, v)
        = \int_{T\R^n} \left(\nabla g(x) \cdot v\right)\, \dd{} V[\mu(t)](x, v).
    \end{align*}
    This also proves that
    \begin{equation*}
        \int_{T\R^n} \left(\nabla g(x) \cdot v\right)\, \dd{} V[\mu(t)](x, v)
    \end{equation*}
    is defined for every $t \ge 0$.\\

    \noindent
    \textbf{Step 5: the map $t \mapsto \int_{T\R^n} \left(\nabla g(x) \cdot v\right)\, \dd{} V[\mu(t)](x, v)$
    is locally integrable}.\\
    The measurability of the map follows by the previous step, since it is the limit of measurable
    functions.

    By \textbf{Step 2}, we have that
    \begin{equation*}
        \abs{\int_{T\RR^n} \left(\nabla g(x) \cdot v\right)\,  \dd{} \left(V^{\tilde u_{k_h}}[\mu_{k_h}(t )\right)] (x, v) }
        \le \norm{\nabla g}_{L^\infty(K)}
        \norm{\hat f}_{L^\infty(K \times U)}
    \end{equation*}
    for every $h \in \NN$ and $t \ge 0$. Passing to the limit as $h \to + \infty$ we deduce that
    \begin{equation*}
        \abs{\int_{T\RR^n} \left(\nabla g(x) \cdot v\right)\,  \dd{} \left(V [\mu(t)]\right) (x, v) }
        \le \norm{\nabla g}_{L^\infty(K)}
        \norm{\hat f}_{L^\infty(K \times U)}
    \end{equation*}
    for every $t \ge 0$, proving the claim.\\

    \noindent
    \textbf{Step 6: the map $t \mapsto \mu(t)$ is a solution}.\\
    For every $h \in \mathbb{N}$,
    since $\mu_{k_h}$ is a solution to $\dot \mu = V^{\tilde u_{k_h}}[\mu]$,
    we have that
    \begin{align*}
        \int_{\RR^n} g(x) \dd{} (\mu_{k_h}(t))(x)
        & = \int_{\RR^n} g(x) \dd{} \bar \mu (x)
        \\
        & \quad
        + \int_0^t \int_{T\RR^n} \left(\nabla g(x) \cdot v\right)\,  
        \dd{} \left(V^{\tilde u_{k_h}} [\mu_{k_h}(s)]\right) (x, v) \dd s
    \end{align*}
    for every $t \ge 0$.
    Passing to the limit as $h \to +\infty$ in the previous equality and using
    the Lebesgue Theorem, we deduce that
    \begin{align*}
        \int_{\RR^n} g(x) \dd{} (\mu(t))(x)
        & = \int_{\RR^n} g(x) \dd{} \bar \mu (x)
        \\
        & \quad
        + \int_0^t \int_{T\RR^n} \left(\nabla g(x) \cdot v\right)\,  
        \dd{} \left(V [\mu(s)]\right) (x, v) \dd s
    \end{align*}
    for every $t \ge 0$. Therefore, all the requirements 
    of~\cite[Definition~2.2]{piccoli_measure_2019} are satisfied, proving that the map
    $t \mapsto \mu(t)$ is a solution to $\dot \mu = V[\mu]$, $\mu(0) = \bar \mu$, concluding
    the proof.
\end{proof}

\appendix
\section*{Appendix}

This appendix is devoted to a brief introduction to the Wasserstein distance
on Polish spaces and to the statement of the disintegration theorem for probability measures.

\section{Probability Measures and Wasserstein Distance}
\label{sec:Wasserstein}
Here we briefly recall the concept of probability measures on Polish spaces and the notion
of the Wasserstein distance; see~\cite[Chapter~5 and Chapter~7]{ambrosio-gigli-savare-book}
for a detailed description.

Let \((X, d)\) be a Polish space, that is, a metric space that is both complete and separable.  
With the symbol $\PP(X)$ we denote the space of probability measures on \(X\);  
that is, \(\mu \in \mathcal{P}(X)\) if and only if $\mu$ is a finite positive Borel
measure on $X$ such that $\mu(X) = 1$.
The support of a probability measure $\mu$ is defined by
\begin{equation}
    \label{eq:support-measure}
    \spt(\mu) \eqdef \left\{x \in X \colon \mu(U) > 0 
    \textrm{ for every neighborhood } U \textrm{ of } x \right\}.
\end{equation}
The notation $\PP_c(X)$ denotes the subset of probability measures on $X$ with compact
support.

Now let \((X_1, d_1)\) and \((X_2, d_2)\) be Polish spaces. 
For a Borel map \(\phi \colon X_1 \to X_2\) 
and a measure \(\mu \in \mathcal{P}(X_1)\), 
the pushforward of \(\mu\) under \(\phi\), denoted \(\phi \pushforward \mu\), is defined by
\begin{equation}
    \label{eq:push-forward}
    (\phi \pushforward \mu)(B) \eqdef 
    \mu(\phi^{-1}(B)) \quad \text{for every Borel set } B \subset X_2.
\end{equation}
Note that $\phi \pushforward \mu \in \PP(X_2)$.
We recall the result about disintegration of measures; 
see~\cite[Theorem~5.3.1]{ambrosio-gigli-savare-book}.
\begin{theorem} (Disintegration)
    \label{thm:disintegration}
    Let $(X, d_X)$ and $(Y, d_Y)$ be two Polish spaces,
    and fix $\mu \in \mathcal{P}(X)$ and a Borel map $r:X \to Y$.

    Then, there exists a family of Borel probability measures $\{\mu_y\}_{y \in Y}$,
    $\mu_y \in \mathcal{P}(X)$,
    uniquely determined for $(r\pushforward\mu)$-a.e. $y \in Y$, such that
    $\mu_y(X \setminus r^{-1}(y)) = 0$ for $(r\pushforward\mu)$-a.e. $y \in Y$ and
    \begin{equation*}
        \int_X \phi(x) \dd{} \mu(x) = \int_Y \int_{r^{-1}(y)} \phi(x) \dd{} \mu_y(x) \dd{}(r \pushforward \mu) (y) 
    \end{equation*}
    for every bounded Borel map $\phi: X \to \RR$.
\end{theorem}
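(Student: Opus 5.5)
The statement immediately above is the Disintegration Theorem (\cref{thm:disintegration}), a classical result. I would prove it by reducing to the case of compact metric spaces and then building the conditional measures through a Radon--Nikodym argument over a countable determining family of test functions.

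\textbf{Step 1: compact case.} Set $\nu \eqdef r\pushforward\mu \in \PP(Y)$. Suppose first that $X$ is compact. Then $C(X)$ is separable, so I fix a countable $\mathbb{Q}$-vector lattice $\mathcal D \subset C(X)$ that is dense and contains the constants. For every $\phi \in \mathcal D$ with $\phi\ge 0$, the measure $B \mapsto \int_{r^{-1}(B)} \phi \dd\mu$ is absolutely continuous with respect to $\nu$. Let $L(\phi)$ denote its Radon--Nikodym density. Since $\mathcal D$ is countable, I can discard a single $\nu$-null set $N$ outside of which the following hold for all $\phi,\psi\in\mathcal D$ and $q\in\mathbb{Q}$:
\begin{itemize}
    \item $\phi \mapsto L(\phi)(y)$ is $\mathbb{Q}$-linear;
    \item it is positive;
    \item it satisfies $L(1)(y)=1$ and $\abs{L(\phi)(y)}\le\norm{\phi}_\infty$.
\end{itemize}
These bounds let me extend $\phi\mapsto L(\phi)(y)$ by density to a positive normalized functional on $C(X)$. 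The Riesz representation theorem then gives $\mu_y\in\PP(X)$ for $y\notin N$; on $N$ I set $\mu_y$ equal to a fixed probability measure. The disintegration formula holds for $\phi\in\mathcal D$ by construction. It extends to $C(X)$ by dominated convergence, and to bounded Borel $\phi$ by a monotone class argument. The same argument gives Borel measurability of $y\mapsto \mu_y(A)$.

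\textbf{Step 2: general Polish $X$.} A Polish space is homeomorphic to a $G_\delta$ subset of a compact metric space $\hat X$ (for instance, the Hilbert cube). I regard $\mu$ as a measure on $\hat X$ concentrated on $X$. I cannot simply take $\hat r = r$, because $r$ is only defined on $X$. Instead I disintegrate with respect to the Borel map $(\mathrm{id},r)$ into $\hat X\times Y$, or equivalently I work with the graph measure $(\mathrm{id}\times r)\pushforward\mu$ on $\hat X\times Y$ and disintegrate it along the projection onto $Y$. This is the compact-fibre version of the construction in Step 1, with $C(\hat X)$ still separable. Finally, $\mu_y(\hat X\setminus X)=0$ for $\nu$-a.e.\ $y$, because integrating this quantity against $\nu$ gives $\mu(\hat X\setminus X)=0$.

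\textbf{Step 3: concentration on the fibres.} I need $\mu_y(X\setminus r^{-1}(y))=0$ for $\nu$-a.e.\ $y$. Fix a countable base $\{B_m\}$ of $Y$. Applying the formula with $\phi=\mathbf 1_{r^{-1}(B_m)}$ gives $\mu_y(r^{-1}(B_m))=\mathbf 1_{B_m}(y)$ for $\nu$-a.e.\ $y$. After removing a countable union of null sets, this holds for all $m$ at once. Since $\{y\}=\bigcap\{B_m: y\in B_m\}$, it follows that $\mu_y(r^{-1}(y))=1$.

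\textbf{Step 4: uniqueness.} Suppose $\{\mu_y\}$ and $\{\mu'_y\}$ are two families satisfying the formula. Testing against $\phi\cdot(\psi\circ r)$, with $\phi\in\mathcal D$ and $\psi$ bounded Borel on $Y$, shows that $\int\phi\dd\mu_y=\int\phi\dd\mu'_y$ for $\nu$-a.e.\ $y$. Since $\mathcal D$ is countable and determines measures, this gives $\mu_y=\mu'_y$ for $\nu$-a.e.\ $y$.

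\textbf{Main obstacle.} The main difficulty is the measure-theoretic bookkeeping. One must choose the exceptional null sets countably, before extending by density. One must also justify the passage from compact spaces to Polish spaces, in particular the measurability of $y\mapsto\mu_y$ and the negligibility of $\hat X\setminus X$ for almost every fibre.
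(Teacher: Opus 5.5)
The paper gives no proof of this statement; it is a background result quoted from \cite[Theorem~5.3.1]{ambrosio-gigli-savare-book}. Your proposal instead rebuilds the classical argument from scratch. You take Radon--Nikodym densities of $B\mapsto\int_{r^{-1}(B)}\phi\dd{\mu}$ for $\phi$ in a countable dense $\mathbb{Q}$-lattice, apply Riesz representation on a compact space, use a compactification to reach Polish $X$, and use a countable base of $Y$ for concentration on fibres. This is sound. It buys self-containedness and shows exactly where separability and Polishness enter, while the citation buys brevity.

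Two pieces of bookkeeping should be made explicit. First, in Step~3 you need the \emph{localized} identity $\int_{r^{-1}(B)}\phi\dd{\mu}=\int_B\int_X\phi\dd{\mu_y}\dd{\nu}(y)$ for every Borel $B\subseteq Y$, not only the case $B=Y$. The constant family $\mu_y\equiv\mu$ satisfies the case $B=Y$ (with inner integral over $X$) but is in general not concentrated on fibres. So concentration cannot be extracted from that case alone, and applying ``the formula'' to $\mathbf{1}_{r^{-1}(B_m)}$ only works in its localized form. Your Radon--Nikodym construction does give the localized identity for $\phi\in\mathcal D$, so you just have to carry the parameter $B$ through the density and monotone-class extensions.

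Second, the graph-measure detour in Step~2 is unnecessary. $X$ is a $G_\delta$, hence Borel, subset of $\hat X$, so $r$ extends to a Borel map on $\hat X$ by a constant on $\hat X\setminus X$. In any case, Step~1 uses $r$ only through the set functions $B\mapsto\int_{r^{-1}(B)}\phi\dd{\mu}$, which make sense because $\mu$ is concentrated on $X$.
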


We finally introduce the Wasserstein distance.
Let \(\mu_1, \mu_2 \in \mathcal{P}(X)\) be two probability measures 
on a Polish space \((X, d)\).  
Then, the 
Wasserstein distance between \(\mu_1\) and \(\mu_2\) is defined by 
\begin{equation}
    \label{eq:Wasserstein-distance}
    W_X (\mu_1, \mu_2) \eqdef \min \left\{\int_{X\times X} d(x, y) \, \dd{}\mu(x,y) 
    \colon \mu \in \plan{\mu_1}{\mu_2}\right\},
\end{equation}
where
\begin{equation}
    \label{eq:plans}
    \plan{\mu_1}{\mu_2} \eqdef \left\{\mu \in \PP(X \times X) \colon 
    \pi_1 \pushforward \mu = \mu_1, \, \pi_2 \pushforward \mu = \mu_2\right\}
\end{equation}
is the set of transference plans between \(\mu_1\) and \(\mu_2\), where $\pi_i$ denotes
the $i$-th projection ($i=1, 2$). 
In the case of  
\(\mu_1, \mu_2 \in \PP_c(X)\), 
then we have the dual formulation  
 \begin{equation}
    \label{eq:K-R_duality}
    W_X(\mu_1, \mu_2) = \sup \left\{\int_X f(x) \dd{} (\mu_1 - \mu_2)(x)\colon f \colon X \to \R, \lip(f) \le 1\right\},
\end{equation}
where \(\lip(f)\) is the Lipschitz constant of \(f\) defined as 
\begin{align*}
   \lip(f) \eqdef \sup \limits_{x, y \in X, \, x \not = y} \frac{|f(x)-f(y)|}{d(x,y)} \,.
\end{align*}
We define the set of optimal plans as
\begin{equation}
    \label{eq:opt_plans}
    \optplan{\mu_1}{\mu_2} \eqdef \left\{\mu \in \plan{\mu_1}{\mu_2}\colon W_X(\mu_1, \mu_2) = 
    \int_{X \times X} d(x, y) \dd{}\mu(x, y)\right\}.
\end{equation}

In the case $X = T\RR^n$, we introduce the following Wasserstein pseudo distance; 
see~\cite[Definition~4.1]{piccoli_measure_2019}.
Consider $\nu_1, \nu_2 \in \mathcal P(T\RR^n)$ two probability measures on $T\RR^n$ and
define $\mu_1 = \pi_1 \pushforward \nu_1 \in \mathcal P(\RR^n)$ 
and $\mu_2 = \pi_1 \pushforward \nu_2 \in \mathcal P(\RR^n)$, where $\pi_1$ denotes the natural
projection on $\RR^n$.
We define the Wasserstein pseudo distance
\begin{equation}
    \label{eq:W_tangent}
    \mathcal W_{T\RR^n}\left(\nu_1, \nu_2\right) \eqdef 
    \inf_{T \in \mathcal{A}(\nu_1, \nu_2)} 
    \left\{\int_{T \RR^n \times T \RR^n} \norm{v-w}_{\RR^n} \dd T(x,v,y,w)
    \right\},
\end{equation}
where $\mathcal{A}(\nu_1, \nu_2) \eqdef \left\{T \in \mathcal T (\nu_1, \nu_2) \colon
        \pi_{1,3} \pushforward T \in \mathcal T^{opt}(\mu_1, \mu_2)\right\},$
$\norm{\cdot}_{\RR^n}$ is the Euclidean norm on $\RR^n$, and the map
$\pi_{1,3}: T\RR^n \times T\RR^n \to \RR^n \times \RR^n$ 
is defined by $\pi_{1,3}(x,v,y,w) = (x, y)$.

Finally, in the case $X = \RR^n \times U$, we introduce the Wasserstein pseudo distance
$\mathcal W_{\RR^n \times U}$.
Consider $\nu_1, \nu_2 \in \mathcal P(\RR^n \times U)$ and
define $\mu_1 = \pi_1 \pushforward \nu_1 \in \mathcal P(\RR^n)$ 
and $\mu_2 = \pi_1 \pushforward \nu_2 \in \mathcal P(\RR^n)$, where $\pi_1$ denotes the 
projection onto $\RR^n$.
We define the Wasserstein pseudo distance
\begin{equation}
    \label{eq:W_RRn_U}
    \mathcal W_{\RR^n \times U}\left(\nu_1, \nu_2\right) \eqdef 
    \inf_{T \in \mathcal{A}(\nu_1, \nu_2)} 
    \left\{\int_{(\RR^n \times U)^2} d_{\mathcal U}(u_1, u_2)\, \dd T(x,u_1,y,u_2)
    \right\},
\end{equation}
where $\mathcal{A}(\nu_1, \nu_2) \eqdef \left\{T \in \mathcal T (\nu_1, \nu_2) \colon
\pi_{1,3} \pushforward T \in \mathcal T^{opt}(\mu_1, \mu_2)\right\}$, 
and the map
$\pi_{1,3}: (\RR^n \times U)^2 \to \RR^n \times \RR^n$ 
is defined by $\pi_{1,3}(x,u_1,y,u_2) = (x, y)$.

We refer the readers to \cite{Santambrogio_book, villani2008optimal} for more properties of the Wasserstein distance
and to~\cite[Remark~1]{piccoli_measure_2019} for the fact that $\mathcal W_{T\RR^n}$ is not a distance.

\bibliography{ref_arxiv}
\bibliographystyle{plain}

\end{document}